\newif\iffinal
\long\def\comment#1{\relax}
\newcommand{\N}{\mathbb{N}}
\newcommand{\Z}{\mathbb{Z}}
\newcommand{\Q}{\mathbb{Q}}
\newcommand{\cO}{\mathcal O}
\newcommand{\cB}{\mathcal B}
\newcommand{\cF}{\mathcal F}
\newcommand{\fp}{\mathfrak p}
\newcommand{\fq}{\mathfrak q}
\newcommand{\fr}{\mathfrak r}
\DeclareMathOperator{\q}{\mathbf{q}}  
\newcommand{\val}{\mathsf v}
\newcommand{\Mod}[1]{\ (\mathrm{mod}\ #1)}
\def\card#1{\left|#1\right|}
\def\abs#1{\left|#1\right|}
\newtheorem{proposition}{Proposition}[section]
\newtheorem{theorem}[proposition]{Theorem}
\newtheorem{corollary}[proposition]{Corollary}
\newtheorem{fact}[proposition]{Fact}
\newtheorem{lemma}[proposition]{Lemma}
\theoremstyle{definition}
\newtheorem{definition}[proposition]{Definition}
\newtheorem{example}[proposition]{Example}
\newtheorem{remark}[proposition]{Remark}
\DeclareMathOperator{\Int}{Int}
\DeclareMathOperator{\divides}{\big\vert}
\DeclareMathOperator{\dividess}{\vert}
\DeclareMathOperator{\dividesnot}{\not\big\vert}
\DeclareMathOperator{\dividenot}{\not\vert}
\DeclareMathOperator{\supp}{supp}
\DeclareMathOperator{\fd}{fd}  
\LetLtxMacro\todonotestodo\todo
\renewcommand{\todo}[2][]{\todonotestodo[backgroundcolor=yellow, #1]{TODO: {#2}}}
\newcommand{\attention}[2][]{\todonotestodo[backgroundcolor=red, #1]{TODO: {#2}}}
\iffinal\renewcommand{\todo}[2][]{}\renewcommand{\attention}[2][]{}\fi
\author[V. Fadinger]{Victor Fadinger}
\address{Institute for Mathematics and Scientific Computing\\
  University of Graz\\ Heinrichstraße 36, \\8010 Graz\\Austria}
\email{\href{mailto: viktor.fadinger@gmail.com}{viktor.fadinger@gmail.com}}
\thanks{V.~Fadinger was supported by the Austrian Science Fund (FWF) [10.55776/W1230].}
\author[S.~Frisch]{Sophie Frisch}
\address{Institut für Analysis und Zahlentheorie\\
Graz University of Technology\\
Kopernikusgasse 24\\8010 Graz\\Austria}
\email{\href{mailto:frisch@math.tugraz.at}{frisch@math.tugraz.at}}
\thanks{S.~Frisch (the corresponding author) and D.~Windisch acknowledge
support by the Austrian Science Fund (FWF) [10.55776/P30934].}
\author[S.~Nakato]{Sarah Nakato}
\address{Department of Mathematics \\
 Kabale University\\
 Plot 364 Block 3 Kikungiri Hill
Kabale\\ Uganda}
\email{\href{mailto: snakato@kab.ac.ug}{snakato@kab.ac.ug}}
\thanks{S.~Nakato was supported in part by Africa-UniNet under the
project P142\_Uganda: Uganda-Austria Collaboration in Algebra and Geometry}
\author[D.~Smertnig]{Daniel Smertnig}
\address{Faculty of Mathematics and Physics (FMF)\\
University of Ljubljana
and Institute of Mathematics, Physics and Mechanics (IMFM)\\
Jadranska ulica 21\\
1000 Ljubljana, Slovenia}
\email{\href{mailto:daniel.smertnig@fmf.uni-lj.si}%
{daniel.smertnig@fmf.uni-lj.si}}
\thanks{D.~Smertnig was funded in whole or in part by the
Slovenian Research and Innovation Agency (ARIS):
Grants P1-0288, J1-60025 and the Austrian Science Fund (FWF)
[10.55776/P36742].}
\author[D.~Windisch]{Daniel Windisch}
\address{KU Leuven, Celestijnenlaan 200B\\
B-3001 Leuven (Heverlee) \\Belgium}
\email{\href{mailto:daniel.windisch.math@gmail.com}%
{daniel.windisch.math@gmail.com}}
\title[{Primes and irreducible elements in atomic domains}]
{Primes and absolutely or non-absolutely irreducible elements in atomic domains}
\keywords{Non-unique factorization, irreducible elements,
absolutely irreducible elements, non-absolutely irreducible elements,
integer-valued polynomials, zero-sum sequences, transfer homomorphisms}
\subjclass[2020]{13A05, 11S05, 11R09, 13B25, 13F20, 11C08}
\begin{document}
\vspace*{-3ex}
\maketitle
\begin{abstract}
We give examples of atomic integral domains satisfying each of the eight
logically possible combinations of existence or non-existence of
the following kinds of elements: 1) primes, 2) absolutely irreducible
elements that are not prime, and 3) irreducible elements that are not
absolutely irreducible. A non-zero non-unit is called absolutely
irreducible (or, a strong atom) if every one of its powers factors
uniquely into irreducibles.
\end{abstract}

\section{Introduction}\label{section:intro}
Among atomic domains, that is, domains in which every non-zero
non-unit is a product of irreducibles, unique factorization domains
are characterized by the fact that all irreducibles are prime.

Chapman and Krause \cite{ChKr2012:Atomic-decay} showed for rings of
integers in number fields that ${\mathcal{O}}_K$ is a UFD if and only if
every irreducible element is absolutely irreducible --- meaning that each
of its powers factors uniquely into irreducibles --- a weaker property
than prime.

Their result prompts the question whether this characterization of
unique factorization domains holds in greater generality: among Dedekind
domains, for instance.  The answer is no.

We have the following implications (whose converses do not hold):
\[
\text{prime}\quad\Longrightarrow\quad\text{absolutely irreducible}
\quad\Longrightarrow\quad\text{irreducible}
\]

This gives us three kinds of elements that may or may not exist in
a given domain:
\begin{enumerate}
\item
Primes
\item
Absolutely irreducibles that are not prime
\item
Irreducibles that are not absolutely irreducible
\end{enumerate}
--- and, therefore, eight logically possible combinations of existence or
non-existence of each kind of element.

Monoid examples for all eight scenarios are easy to find, compared
to examples in integral domains. For instance,
Baginski and Kravitz~\cite{BaKr2010:HFKR} provide examples of
non-factorial monoids whose irreducibles are all absolutely irreducible,
but not prime.

We will show that all eight scenarios occur in atomic domains.

\begin{remark}\label{mmp-mmm}
Some cases are trivial: Atomic domains without any irreducible
elements, let alone absolutely irreducible or prime elements, are
just fields. (There are also non-atomic domains without irreducible
elements, the so-called antimatter domains~\cite{anderson2007antimatter}.)

Atomic domains containing primes but no other irreducible elements are
precisely those UFDs that are not fields, as mentioned.
\end{remark}

For the six non-trivial combinations we now proceed
to give examples.

In the following table, plus indicates existence, and minus,
non-existence. $R_1$ and $R_2$ are certain Dedekind domains
with class group $\Z^n$, see Proposition~\ref{Prop:withoutprime}
and Proposition~\ref{prop:exm-allai-but-not-ufd}, respectively.

\begin{table}[h]
\centering

\resizebox{1\textwidth}{!}{
\begin{tabular}{|c|c|c|c|c|}
\hline
\multirow{2}*{Result}& \multirow{2}*{Example}  & Irreducible but not &
Absolutely irreducible  &  \multirow{2}*{Prime} \\
 &  &absolutely irreducible &but not prime& \\ \hline

\ref{pana1},\ref{pana2}& $\Z[\sqrt{-14}]$ & + & +& + \\ \hline

\ref{ppm}& $\Int(\cO_K)$ & + & +& - \\ \hline

\ref{pmp},\ref{pmpOne}& $\mathbb{R} + X\mathbb{C}[X]$ & + & -& +\\ \hline

\ref{pmm}& $\mathbb{R} + X\mathbb{C}[[X]]$ & + & -& - \\ \hline

\ref{mpp}& $R_2$  & - & + & + \\ \hline

\ref{mpm}& $R_1$ & - & + & - \\ \hline

\ref{mmp-mmm}& UFDs& - & - & + \\ \hline

\ref{mmp-mmm}& Fields & -& -& - \\ \hline

\end{tabular}}
\end{table}

\section{Preliminaries}

\subsection{Factorization terms}
We recall some concepts and terminology related to factorization.
For a comprehensive introduction to non-unique factorizations, we refer
to the textbook by Geroldinger and Halter-Koch~\cite{GeHa2006:NonUniq}.

The terms that we here define for a monoid $H$ we will use mostly
(but not only) in the context of an integral domain $R$. In that case,
the monoid in question is understood to be $(R\setminus\{0\},\cdot)$.

\begin{definition}
Let $(H,\cdot)$ be a commutative monoid.
\begin{enumerate}
\item
We denote the group of units of the monoid $H$ by $H^{\times}$.
\item $r\in H$ is said to be \emph{irreducible} in $H$
(or, an \emph{atom} of $H$) if it is a non-unit that is
not a product of two non-units of $H$.
\item A \emph{factorization} of a non-unit $r \in H$ is an
expression
       \[r = a_{1}\cdot\ldots\cdot a_{n},\]
       where $n\ge 1$ and $a_i$ is irreducible in $H$ for $1\le i \le n$.

The number $n$ of irreducible factors is called the
\emph{length} of the factorization. 
\item $r,s\in H$ are \emph{associated} in $H$ if there exists
a unit $u \in H$ such that $r = us$. We denote this by $r \sim s$.
\item Two factorizations into irreducibles of the same element,
       \begin{equation}\label{eq:2-fac-same-diff}
       r = a_{1}\cdot\ldots\cdot a_{n} = b_{1} \cdot\ldots\cdot b_{m},
       \end{equation}
are called \emph{essentially the same} if $n = m$ and, after
re-indexing,
$a_{j}\sim b_{j}$ for $1 \leq j \leq m$.
Otherwise, the factorizations in
\eqref{eq:2-fac-same-diff} are called \emph{essentially different}.
\item
$(H,\cdot)$ is called \emph{atomic} if every non-unit has a
factorization.
\item $(H,\cdot)$ is \emph{factorial} if $H$ is atomic and any 
two factorizations of an element are essentially the same.
\item $(H,\cdot)$ is \emph{half-factorial} if $H$ is atomic and any 
two factorizations of an element have the same length, i.e., the same
number of irreducible factors.
\item In a half-factorial monoid, the \emph{length} of a
non-zero element $h$, denoted $\ell(h)$, is defined as the length of
a factorization of $h$ into irreducibles. 
\end{enumerate}
\end{definition}

\begin{definition}\label{defcancellative}
A commutative  monoid $(H,\cdot)$ is called cancellative
if $ab=ac$ implies $b=c$, for any $a,b,c\in H$. 

The quotient group $\q(H)$ of a cancellative monoid is the group
defined on the set of equivalence classes of pairs $(a,b)\in H\times H$ 
with respect to the equivalence relation 
\[(a,b)\simeq (c,d) \Longleftrightarrow ad=bc,\]
endowed with the multiplication \[\frac{a}{b}\cdot\frac{c}{d} =
\frac{ab}{cd}, \]
where $\frac{a}{b}$ denotes the equivalence class of $(a,b)$.
\end{definition}

\begin{definition}\label{defabsirred}
Let $H$ be a cancellative commutative monoid.
\begin{enumerate}
\item
An irreducible element $r \in~H$ is called \emph{absolutely irreducible}
(or, a \emph{strong atom}), if for all natural numbers $n$, every
factorization of $r^n$ is essentially the same as $r^n = r \cdot\ldots\cdot r$.
\item
If $r \in H$ is irreducible, but not absolutely irreducible,
it is called \emph{non-absolutely irreducible}.
\end{enumerate}
\end{definition}

\subsection{Transfer homomorphisms}
Transfer homomorphisms are a key tool in factorization theory. 
They are used to study non-unique factorization in a domain 
(or monoid) using a simpler ``model'' monoid.
In this section, we show that transfer homomorphisms preserve 
absolute irreducibility (in the forward direction).

\begin{definition}\cite[Definition 3.2.1]{GeHa2006:NonUniq}
Let $H$ and $M$ be cancellative commutative monoids. A monoid homomorphism
$\theta\colon H \longrightarrow M$ is called a transfer homomorphism
if it has the following properties:
\begin{enumerate}
\item $M = \theta(H)M^{\times}$ and $\theta^{-1}(M^{\times})=  H^{\times}$
\item If $h \in H$ and $b$,~$c \in M$ such that $\theta(h) = bc$,
 then there exist $v$,~$w \in H$
such that $h=vw$ and $\theta(v) \sim b$ and $\theta(w) \sim c$.
\end{enumerate}
\end{definition}

\begin{fact}\cite[Proposition 3.2.3]{GeHa2006:NonUniq}\label{fact:TH}
Let $\theta\colon H \longrightarrow M$ be a transfer homomorphism.

\begin{enumerate}
\item \label{TH:irred} An element $u \in H$ is irreducible in $H$
if and only if $\theta(u)$ is irreducible in $M$.
\item $H$ is atomic if and only if $M$ is atomic.
\end{enumerate}
\end{fact}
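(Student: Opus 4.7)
The plan is to prove part (i) first and then use it to derive part (ii). All four implications are direct consequences of the two defining properties of a transfer homomorphism, so the work amounts to organized bookkeeping rather than substantial argument.

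For part (i), the forward direction will use property (ii) of the definition: given any factorization $\theta(u) = bc$ of the image of an irreducible $u \in H$, I would lift it to $u = vw$ in $H$ with $\theta(v) \sim b$ and $\theta(w) \sim c$, and use irreducibility of $u$ to force (say) $v \in H^{\times}$, hence $b \sim \theta(v) \in M^{\times}$. Non-unitness of $\theta(u)$ is handed to us by $\theta^{-1}(M^{\times}) = H^{\times}$. The backward direction is dual: any factorization $u = vw$ in $H$ pushes forward to $\theta(u) = \theta(v)\theta(w)$, and the same preimage property pulls the resulting unit back to $H$.

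For part (ii), the forward direction is a straightforward lift-factor-project argument. Given a non-unit $m \in M$, property (i) of the definition ($M = \theta(H)M^{\times}$) writes $m = \theta(h) u$ with $u \in M^{\times}$; then $h$ must be a non-unit of $H$, any atomic factorization of $h$ projects to a factorization of $m$ (with $u$ absorbed into one factor), and the first part of the fact makes the projected factors irreducible. The converse is where I expect the only mild obstacle. Given a non-unit $h \in H$, factor $\theta(h) = b_1 \cdots b_n$ in $M$ and induct on $n$. The base case $n = 1$ is immediate from the first part of the fact. For $n \geq 2$, group $\theta(h) = b_1 \cdot (b_2 \cdots b_n)$ and apply property (ii) of the definition to split $h = v_1 w_1$ with $\theta(v_1) \sim b_1$ and $\theta(w_1) \sim b_2 \cdots b_n$; the first part of the fact makes $v_1$ irreducible, and the induction hypothesis applied to $w_1$ (whose image still carries a length-$(n-1)$ atomic factorization after absorbing the associating unit into some $b_i$) produces the remaining atoms of $H$. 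That bookkeeping step with the associating unit is the only place that needs a moment of care; everything else is a mechanical unwinding of the definitions.
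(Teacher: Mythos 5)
Your argument is correct and is essentially the standard proof; the paper itself gives no proof of this Fact but simply cites \cite[Proposition 3.2.3]{GeHa2006:NonUniq}, where the same lift-and-project reasoning (including the induction on the length of a factorization of $\theta(h)$ for the converse of (ii)) is carried out. The only step worth making explicit is that the forward direction of (i) needs $\theta(H^{\times})\subseteq M^{\times}$ to conclude $b\in M^{\times}$ from $v\in H^{\times}$, and this is exactly the inclusion $H^{\times}\subseteq\theta^{-1}(M^{\times})$ contained in the defining equality $\theta^{-1}(M^{\times})=H^{\times}$.
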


\begin{lemma}\label{th-absirred:forward}
Let $\theta\colon H \longrightarrow M$ be a transfer homomorphism,
and $c \in H$ an irreducible element.
If $c$ is absolutely irreducible in $H$
then $\theta(c)$ is absolutely irreducible in $M$.
\end{lemma}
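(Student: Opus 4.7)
The plan is to lift an arbitrary factorization of $\theta(c)^n$ in $M$ back to a factorization of $c^n$ in $H$, and then use absolute irreducibility of $c$ in $H$ to conclude.

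First, I would fix a positive integer $n$ and a factorization $\theta(c)^n = b_1 \cdots b_m$ into irreducibles of $M$. Since $\theta(c^n) = \theta(c)^n$ and the lifting property in the definition of a transfer homomorphism lifts a factorization into two factors, I would iterate it (by an easy induction on $m$) to produce elements $a_1, \dots, a_m \in H$ with
\[
c^n = a_1 \cdots a_m \qquad\text{and}\qquad \theta(a_i) \sim b_i \quad \text{for } 1 \le i \le m.
\]
Since each $b_i$ is irreducible in $M$, so is each $\theta(a_i)$; by Fact~\ref{fact:TH}\eqref{TH:irred}, each $a_i$ is then irreducible in $H$. Hence $c^n = a_1 \cdots a_m$ is a factorization of $c^n$ into irreducibles of $H$.

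Next, I would invoke the hypothesis that $c$ is absolutely irreducible in $H$: this forces $m = n$, and after a suitable reindexing, $a_i \sim c$ in $H$ for every $i$. Writing $a_i = u_i c$ with $u_i \in H^\times$ and applying $\theta$, I get $\theta(a_i) = \theta(u_i)\theta(c)$, and $\theta(u_i) \in M^\times$ since $\theta$ maps units to units (this is part of the definition of a transfer homomorphism, as $\theta^{-1}(M^\times) = H^\times$ combined with $M = \theta(H)M^\times$ shows $\theta(H^\times) \subseteq M^\times$). Consequently $\theta(a_i) \sim \theta(c)$ in $M$, and combining with $\theta(a_i) \sim b_i$ gives $b_i \sim \theta(c)$ for all $i$. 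Together with $m = n$, this says exactly that the given factorization is essentially the same as $\theta(c)^n = \theta(c)\cdots\theta(c)$. Since $\theta(c)$ is irreducible in $M$ by Fact~\ref{fact:TH}\eqref{TH:irred}, absolute irreducibility follows.

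The only mildly delicate point is the iterated lifting, but this is truly routine: the two-factor lifting property directly yields an $m$-factor version by induction. Everything else is formal once one keeps track of the associate relation $\sim$ on both sides of $\theta$.
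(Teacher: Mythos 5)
Your proof is correct and rests on exactly the same mechanism as the paper's: lifting along the transfer homomorphism and using Fact~\ref{fact:TH}\ref{TH:irred} to transport irreducibility and the associate relation between $H$ and $M$. The only difference is presentational --- the paper argues contrapositively by lifting a single irreducible divisor of $\theta(c)^m$ that is not associated to $\theta(c)$, whereas you lift an entire factorization of $\theta(c)^n$ directly; both are valid and essentially equivalent.
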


\begin{proof}
If $\theta(c)$ is not absolutely irreducible in $M$,
then there exists an irreducible element $a\in M$, not associated 
to $\theta(c)$, that divides $\theta(c)^m$ for some $m \in \N$.
By the first of the defining properties of a transfer homomorphism, 
we may assume $a=\theta(b)$ for some $b\in H$. Since $\theta(b)$ is
irreducible, it follows by the second of the defining properties of
a transfer homomorphism that this $b$ is an irreducible element of $H$,
dividing $c^m$. Also, $b$ cannot be associated to $c$, because otherwise
$\theta(b)$ would be associated to $\theta(c)$.
\end{proof}

\begin{remark}\label{l:th-absirred}
The converse of Lemma~\ref{th-absirred:forward} does not hold.
A non-absolutely irreducible element of $H$ may be
mapped to an absolutely irreducible element of $M$ by a transfer
homomorphism $\theta\colon H \longrightarrow M$. We illustrate this
by an example:
If $H$ is a half-factorial commutative monoid and $M = (\N_0, +)$, 
then the function
\begin{align*}
 \theta\colon & H \longrightarrow M   \\
  & a \longmapsto \ell(a),
\end{align*}
where $\ell(a)$ denotes the length of $a$ (that is, the number of
irreducibles in a factorization of $a$), is a transfer homomorphism.

If $c \in H$ is irreducible in $H$, then $\theta(c)$ is irreducible in $M$,
by Fact~\ref{fact:TH}.
The unique irreducible of $M$ is $1$ (which is prime).
Hence $\theta(c)=1$ for all irreducible $c \in H$, and in particular,
every irreducible in $M$ is absolutely irreducible.
However, there exist half-factorial monoids $H$ that contain
non-absolutely irreducibles, for instance 
$H = \mathcal O_K\setminus \{0\}$ with $\mathcal O_K$ a ring of 
algebraic integers whose class group is isomorphic to $\Z/2\Z$ 
(see Fact~\ref{f:algint} below).
\end{remark}

\subsection{Krull monoids}
Recall that an integral domain is Krull if and only if it is
completely integrally closed and Mori. The concepts involved in
this characterization depend only on the multiplicative monoid
and can thus be used to define Krull monoids.

\begin{def}\label{defKrullmonoid}
A cancellative commutative monoid $H$ is called 
\emph{completely integrally closed} if it contains all elements 
of $\q(H)$ that are almost integral over $H$, that is, those 
$c\in \q(H)$ for which there exists some $d\in H$ such that for
all $n\in\N$, $dc^n\in H$.

Likewise, a  cancellative commutative monoid $(H,\cdot)$ is \emph{Mori}
if it satisfies the ascending chain condition for  divisorial ideals. 
Here, a divisorial ideal is defined just like a divisorial ideal of 
an integral domain, with the quotient group $\q(H)$ of the cancellative 
monoid taking the place of the quotient field of an integral domain.

A cancellative commutative monoid is a \emph{Krull monoid} if it 
is completely integrally closed and Mori.
\end{def}

With this definition of Krull monoid, an integral domain $D$ is 
Krull if and only if $D \setminus \{0\}$ is a Krull
monoid~\cite[Thm.~2.10.2.3]{GeHa2006:NonUniq}.

We refer to \cite[Chapter 2]{GeHa2006:NonUniq} for the algebraic 
theory of Krull monoids and for more details on the terminology we
just introduced.

Let $H$ be a Krull monoid and let $\mathfrak X(H)$ denote the set 
of nonempty divisorial prime (semigroup) ideals.
The nonempty divisorial ideals of $H$ form a free abelian monoid 
with basis $\mathfrak X(H)$ with respect to the divisorial product; 
the nonempty divisorial fractional ideals form a free abelian group 
on the same basis.
Explicitly, every nonempty divisorial ideal $\mathfrak a$ of $H$ 
is uniquely expressible as a divisorial product of prime ideals
\[
\mathfrak a = \mathfrak p_1 \cdots_v \mathfrak p_r =
\left( 
\prod_{\mathfrak p \in \mathfrak X(H)} 
\mathfrak p^{\mathsf v_{\mathfrak p}(\mathfrak a)} 
\right)_v.
\]
The set 
$\{\, \fp \in \mathfrak X(H) : \mathsf v_{\fp}(\mathfrak a) > 0 \,\}
= \{\, \fp \in \mathfrak X(H) : \fp \supseteq \mathfrak a \,\}$ is 
the \emph{support} of $\mathfrak a$.
The class group $G=\mathcal C(H)$ of $H$ is the quotient of the group 
of nonempty divisorial fractional ideals of $H$ by the subgroup of 
principal fractional ideals.
Let $[\mathfrak a] \in G$ denote the class of $\mathfrak a$.
In factorization theory, the set 
$G_0 = \{\, [\mathfrak p] : \mathfrak p \in \mathfrak X(H) \,\}$ 
of classes containing prime divisors is of central importance.

We have two main examples of Krull monoids in mind: the first are 
the Dedekind domains, which are Krull domains. In fact, Dedekind 
domains (apart from fields, which usually count as Dedekind domains, too)
are precisely the one-dimensional Krull 
domains~\cite[Thm.~2.10.6]{GeHa2006:NonUniq}.
In a Dedekind domain $D$, the map 
$\mathfrak a \mapsto \mathfrak a \setminus \{0\}$ is a bijection 
between ring ideals of $D$ and divisorial semigroup ideals of 
$D \setminus \{0\}$.

The divisorial product is the usual product of ideals, and the 
class group is the usual one (the group of fractional ideals by 
principal fractional ideals).

The following is another class of Krull monoids.
\begin{definition}
Let $G$ be an additively written abelian group and $G_{0} \subseteq G$
a nonempty subset. Let $\mathcal{F}(G_{0})$ be the free abelian
monoid with basis $G_{0}$.
\begin{enumerate}
\item\label{block}
The elements of $\mathcal{F}(G_{0})$ are called \emph{sequences}
over $G_0$ and are of the form
 \[S = \prod_{g \in G_{0}}g^{n_g},\]
where $n_g = \mathsf{v}_g(S) \in \N \cup \{0\}$ with $n_g = 0$ for
almost all $g \in G_0$.
\item
The \emph{length} of a sequence $S$ is
\[\card{S} = \sum_{g \in G_{0}}\mathsf{v}_{g}(S) ~~\in ~\N \cup \{0\}\]
and the \emph{sum} of $S$ is
\[\sigma(S)= \sum_{g \in G_{0}}\mathsf{v}_{g}(S)g ~~\in ~G. \]
The \emph{support} of $S$ is
\[
\supp(S) = \{\, g \in G_0 : \mathsf{v}_g(S) > 0\,\}.
\]
\item
The monoid
\[\mathcal{B}(G_{0}) =
 \bigl\{S \in \mathcal{F}(G_{0})~ :~ \sigma(S) =0 \bigr\}\]
is called the \emph{monoid of zero-sum sequences} over $G_{0}$ or the
\emph{block monoid}.
\end{enumerate}
\end{definition}

The irreducibles of $\mathcal B(G_0)$ are the 
\emph{minimal zero-sum sequences}:
non\-empty sequences whose sum is $0$, but which do not contain a 
nonempty proper subsequence whose sum is $0$.

The following key theorem links the factorization theory of Krull
monoids to that of monoids of zero-sum sequences, showing that the 
latter serve as a combinatorial model for the factorization in Krull 
monoids and, in particular, Dedekind domains.
A proof can be found in \cite[Theorem 3.4.10.1]{GeHa2006:NonUniq}.
For more expository accounts of this theory see 
\cite[Theorem 1.3.4.2]{Geroldinger09}, the survey 
\cite{GeroldingerZhong20}, or the expository article~\cite{BaginskiChapman11}.

\begin{theorem} \label{t:krull-transfer}
Let $H$ be a Krull monoid with class group $G$, and let 
$G_0 \subseteq G$ be the set of classes containing prime divisors.
Then there exists a transfer homomorphism
\[
\theta \colon H \to \mathcal B(G_0),\quad  a \mapsto [\fp_1] \cdots [\fp_r],
\]
where $aH = \fp_1 \cdots_v \fp_r$ with $\fp_i \in \mathfrak X(H)$.
\end{theorem}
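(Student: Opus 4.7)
The plan is to define $\theta$ via the unique divisorial prime factorization of the principal ideal $aH$, and then verify in turn: well-definedness (image lies in $\cB(G_0)$), the homomorphism property, and the two transfer axioms. The basic Krull-theoretic facts I need are that the divisorial ideals of $H$ form a free abelian monoid on $\mathfrak X(H)$, that $a \mapsto aH$ is multiplicative on divisorial principal ideals, and that $\mathfrak a$ is principal if and only if $[\mathfrak a]=0$ in $G$.

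First I would set $\theta(a) = [\fp_1]\cdots[\fp_r] \in \mathcal F(G_0)$, using the unique factorization $aH = \fp_1 \cdots_v \fp_r$; uniqueness of divisorial prime factorization makes this well-defined. Since $aH$ is a principal divisorial ideal, its class is trivial, so $\sigma(\theta(a)) = [\fp_1] + \cdots + [\fp_r] = [aH] = 0$, putting $\theta(a)$ in $\cB(G_0)$. Multiplicativity of the divisorial product, $(ab)H = aH \cdot_v bH$, together with the uniqueness of the prime factorization, yields $\theta(ab) = \theta(a)\theta(b)$. Clearly $\theta(1) = 1$ (the empty sequence).

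Next I would check the first transfer axiom. The monoid $\cB(G_0)$ is reduced, so $\cB(G_0)^\times = \{1\}$, and the axiom reduces to showing surjectivity and $\theta^{-1}(\{1\}) = H^\times$. For surjectivity, given $S = [\fp_1]\cdots[\fp_r] \in \cB(G_0)$, the divisorial ideal $\mathfrak a = \fp_1 \cdots_v \fp_r$ has class $\sigma(S) = 0$, hence is principal, say $\mathfrak a = aH$, and then $\theta(a) = S$. The equivalence $\theta(a)=1 \Leftrightarrow aH = H \Leftrightarrow a \in H^\times$ is immediate.

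For the second axiom, suppose $\theta(a) = bc$ with $b,c \in \cB(G_0)$. Write $b = [\fp_1]\cdots[\fp_s]$ and $c = [\fp_{s+1}]\cdots[\fp_r]$ so that $aH = \fp_1 \cdots_v \fp_r$. Because $b$ and $c$ are zero-sum, the divisorial ideals $\mathfrak b = \fp_1 \cdots_v \fp_s$ and $\mathfrak c = \fp_{s+1} \cdots_v \fp_r$ are both principal; pick generators $\mathfrak b = vH$ and $\mathfrak c = wH$. Then $vwH = \mathfrak b \cdot_v \mathfrak c = aH$, so $vw = au$ for some unit $u \in H^\times$; replacing $v$ by $vu^{-1}$ gives $a = vw$ with $\theta(v) = b$ and $\theta(w) = c$ (and since $\cB(G_0)$ is reduced, $\sim$ is just equality here). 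The main technical point to be careful about is to invoke precisely the unique divisorial prime factorization and the class group description of principality for Krull monoids; once those are in hand, each verification is essentially formal, and the adjustment by a unit in the last step is the only minor subtlety.
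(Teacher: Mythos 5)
Your proof is correct. The paper does not prove this theorem itself but cites \cite[Theorem 3.4.10.1]{GeHa2006:NonUniq}, and your argument is essentially that standard proof: define $\theta$ via the unique divisorial prime factorization, use that a divisorial ideal is principal iff its class is trivial to get surjectivity and to lift factorizations, and exploit that $\mathcal B(G_0)$ is reduced so that the unit adjustments are harmless.
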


The homomorphism $\theta$ is called the \emph{block homomorphism} of $H$.

\section{Rings whose irreducible elements are all non-absolutely irreducible}%
\label{section:NA}
Examples of atomic domains all of whose irreducibles are 
non-absolutely irreducible occur among generalised power series rings.

\begin{remark}\label{rem:NM}
Let $K_1 \subseteq K_2$ be fields, $n \in \N$, and $R = K_1 + X^nK_2[[X]]$.
Let $H$ be the multiplicative monoid $R \setminus \{0\}$ and $M_n$ be the 
numerical monoid
$\{0\} \cup (n +\N_0)$. Then the monoid homomorphism
\begin{align*}
 \theta : & H \longrightarrow M_n   \\
  & uX^\ell \longmapsto \ell
\end{align*}
is a transfer homomorphism, 
where $u$ is a unit of $K_2[[X]]$ and $\ell \in M_n$.
\end{remark}

\begin{proposition}\label{Prop:non-abs}\label{pmm}
Let $K_1 \subseteq K_2$ be fields and $n \in \N$, and set 
$R = K_1 + X^nK_2[[X]]$.
Then the following are equivalent.
\begin{enumerate}
\item $K_1 = K_2$ and $n=1$.\label{Prop:non-abs-itemI}
\item $R$ is a UFD. \label{Prop:non-abs-itemII}
\item every irreducible of $R$ is prime. \label{Prop:non-abs-itemIII}
\item $R$ has a prime element. \label{Prop:non-abs-itemIV}
\item $R$ has an absolutely irreducible element.
\label{Prop:non-abs-itemV}
\end{enumerate}
\end{proposition}

\begin{proof}
The implications \ref{Prop:non-abs-itemI} $\Longrightarrow$ \ref{Prop:non-abs-itemII} 
$\Longrightarrow$ \ref{Prop:non-abs-itemIII} $\Longrightarrow$ \ref{Prop:non-abs-itemIV} 
$\Longrightarrow$ \ref{Prop:non-abs-itemV} follow immediately.

For \ref{Prop:non-abs-itemV} $\Longrightarrow$ \ref{Prop:non-abs-itemI}, 
suppose $K_1 = K_2$ and $n >1$. Then the units of $R$ are
the elements of $R$ with constant term in $K_2 \setminus \{0\}$. 
It follows from Remark~\ref{rem:NM} and Fact~\ref{fact:TH}
that the irreducible elements of $R$ are the polynomials of the form 
$r = uX^m$, where $u$ is a unit of $K_2[[X]]$ and $n \leq m \leq 2n-1$. 
Every irreducible of the form $r$ is not
absolutely irreducible since for any $n \leq t \leq 2n-1$, with $t \neq m,$
\begin{equation*}
r^t = u^tX^t \cdot \underbrace{X^t \cdots X^t}_{m-1 \text{ copies}}
\end{equation*}
is a factorization of $r$ essentially different from
\[
  \underbrace{r \cdots r}_{t \text{ copies}}.
\]

 Suppose $K_1 \neq K_2$. Then the units of $R$ are the elements of $R$
 with constant term in $K_1 \setminus \{0\}$. Similarly, the
 irreducible elements of $R$ are the polynomials of the form $uX^m$,
  where $u$ is a unit of
$K_2[[X]]$ and $n \leq m \leq 2n-1$. Moreover, if $u_1, u_2$ are units of 
$K_2[[X]]$, then $u_1X^m \not \sim u_2X^m$ if
$u_1u^{-1}_2$ has a constant
term in $K_2 \setminus K_1$. It follows that each irreducible of the
form $uX^m$ is not absolutely irreducible since
\begin{equation*}
(uX^m)^2 = ucX^m \cdot uc^{-1}X^m
\end{equation*}
is a factorization of $(uX^m)^2$ essentially different from $uX^m \cdot uX^m$,
where $c \in K_2 \setminus K_1$.
\end{proof}

\section{Rings with both absolutely and non-absolutely
irreducible elements, but no primes}\label{section:ANANP}

As an example of an atomic domain that has no prime element and
contains both absolutely and non-absolutely irreducible
elements, we propose the ring of integer-valued polynomials $\Int(\Z)$,
or, more generally, $\Int(\cO_K)$, where $\cO_K$ is the ring of integers
in a number field $K$. For a domain $D$ with quotient field $K$, the
ring of integer-valued polynomials on $D$ is
\[\Int(D) = \{f \in K[x] \mid f(D) \subseteq D \}.\]

\begin{definition}
The \emph{fixed divisor} of $f \in \Int(D)$, abbreviated $\fd(f)$, is 
the ideal of $D$ generated by $f(D)$. If the fixed divisor is a principal
ideal, we say $\fd(f)=c$, by abuse of notation, for $\fd(f)=(c)$.
A polynomial $f \in \Int(D)$ with $\fd(f)=1$ is called
\emph{image-primitive}.
\end{definition}

It is clear that $\fd(f)\cdot \fd(g)\supseteq \fd(fg)$, but note that
the inclusion may be strict. In any case, all divisors in $\Int(D)$ of
an image-primitive polynomial $f \in \Int(D)$ are image-primitive.

\subsection{No prime elements}\label{IntZnoprimes}
Anderson, Cahen, Chapman, and Smith~%
\cite{AndersonS-Cahen-Chapman-Smith:1995:fac-iv}
showed that $\Int(\Z)$ has no prime element by using the fact that
$\Int(\Z)$ is a Pr\"ufer domain whose maximal ideals are known and
are not principal. They argue that a Pr\"ufer domain never has any
principal prime ideals other than $(0)$ and, possibly, maximal ideals.
At the same time, no maximal ideal of $\Int(\Z)$ is principal (or even
finitely generated).  Their argument readily generalizes to $\Int(\cO_K)$.

We will here give an elementary, more explicit, proof that $\Int(\Z)$
(and, more generally, $\Int(\cO_K)$) has no prime element, by exhibiting,
for every potential prime element $p$, a product $ab$ such that $p$
divides $ab$, but $p$ divides neither $a$ nor $b$.

The only non-trivial fact needed is that every non-constant
polynomial in $\Z[x]$ has zeros modulo infinitely many primes, and,
more generally, for every number field $K$, every non-constant
polynomial in $\cO_K[x]$ has zeros modulo infinitely many maximal ideals
of $\cO_K$.

To see this we refer to a few facts about d-rings, a notion introduced
independently by Brizolis~\cite{brizolis1975:hilbert},
and Gunji and McQuillan~\cite{Gunji-Donald1976:divi}.

\begin{definition}
A domain $D$ is a {\em d-ring\/} if for every
non-constant polynomial $f\in D[x]$ there exists a maximal ideal
$M$ of $D$ and an element $d\in D$ such that $f(d)\in M$.
\end{definition}

So, $D$ being a d-ring just means that a polynomial $f\in D[x]$
cannot map $D$ into the set of units of $D$ unless $f$ is a constant.
It is easy to see that $\Z$ is a d-ring. Indeed, any $f\in \Z[x]$
such that $f(\Z)\subseteq \{1, -1\}$ must be constant.

Alternatively, d-rings can be characterized as those domains for which
every integer-valued rational function is an integer-valued polynomial.
We summarize what we need to know about
d-rings (cf.~\cite{CaCh1997:IVP}, \S VII.2).

\begin{fact}\cite[Lemma 1.3]{brizolis1975:hilbert},
\cite[Prop.~1]{Gunji-Donald1976:divi}.\label{d-ring-equiv}
The following are equivalent:
\begin{enumerate}
\item $D$ is a d-ring.
\item For every non-constant $f\in D[x]$, the intersection of the
maximal ideals $M$ of $D$ for which $f$ has a zero modulo $M$ is $(0)$.
\item For every non-constant $f\in \Int(D)$, there exists a
maximal ideal $M$ of $D$ and an element $d\in D$ such that $f(d)\in M$.
\end{enumerate}
\end{fact}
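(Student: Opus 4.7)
The plan is to prove the equivalences via the cyclic chain (ii)~$\Rightarrow$~(iii)~$\Rightarrow$~(i)~$\Rightarrow$~(ii). The first two implications are routine, while the last is the main substance.

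For (ii)~$\Rightarrow$~(iii), given a non-constant $f \in \Int(D)$, I would choose a non-zero $c \in D$ such that $g = cf \in D[x]$; then $g$ is also non-constant. Applying (ii) to $g$ with the parameter $c$ yields a maximal ideal $M$ with $c \notin M$ and an element $d \in D$ with $g(d) \in M$. Since $c$ is a unit modulo $M$, one can divide to obtain $f(d) \in M$. The implication (iii)~$\Rightarrow$~(i) is immediate from $D[x] \subseteq \Int(D)$, as a non-constant polynomial in $D[x]$ is also a non-constant element of $\Int(D)$.

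The hard direction is (i)~$\Rightarrow$~(ii). Given a non-constant $f \in D[x]$ and a non-zero $c \in D$, I want to produce a maximal ideal $M$ of $D$ avoiding $c$ at which $f$ has a zero. I plan to argue by contraposition: assume every maximal ideal at which $f$ has a zero already contains $c$, and aim to construct a non-constant polynomial $h \in D[x]$ with $h(D) \subseteq D^{\times}$, contradicting the d-ring hypothesis. A useful preliminary, obtained by applying (i) to the polynomial $a_1 \cdots a_n x + 1$, is that in any d-ring finitely many non-zero elements of $D$ can be avoided simultaneously by some maximal ideal; in particular, the Jacobson radical of $D$ vanishes, which immediately handles the case $f(0) = 0$.

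The main obstacle will be constructing the contradicting polynomial $h$. Individually, each desired property is easy to encode in one variable---zeros of $cx - 1$ modulo $M$ force $c$ to be a unit, and zeros of $f$ modulo $M$ give the target---but combining both into the zero set of a single univariate polynomial requires care. My intended approach is to consider polynomials of the form $f(cx + r)$ for varying $r \in D$: under the contraposition hypothesis each such polynomial forces $f(r) \in M$ and $c \in M$ simultaneously. Analysing the resulting reduction $\bar{f} \in (D/(c))[x]$ and combining with the avoidance preliminary should either yield a direct witness for (ii) or produce a non-constant polynomial taking values exclusively in $D^{\times}$, contradicting (i).
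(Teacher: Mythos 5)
The paper states this Fact with citations to Brizolis and Gunji--McQuillan and gives no proof of its own, so I can only assess your argument on its merits. Your cycle (ii)~$\Rightarrow$~(iii)~$\Rightarrow$~(i) is correct and complete: clearing denominators, picking $M$ with $c\notin M$ at which $cf$ vanishes, and using primality of $M$ to divide out $c$ is exactly right, and (iii)~$\Rightarrow$~(i) is indeed immediate from $D[x]\subseteq\Int(D)$. The preliminary for (i)~$\Rightarrow$~(ii) (apply the d-ring property to $a_1\cdots a_nx+1$ to find a maximal ideal avoiding finitely many non-zero elements, hence trivial Jacobson radical) is also correct and disposes of the case where $f$ has a zero in $D$.

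The gap is in the main step of (i)~$\Rightarrow$~(ii), and your proposed family $f(cx+r)$ cannot close it. What that family yields under your contraposition hypothesis is: for every $r\in D$ there is a maximal ideal containing both $c$ and $f(r)$, i.e.\ $(c,f(r))\ne D$ for all $r$. This consequence is strictly weaker than the hypothesis and is \emph{not} contradictory on its own: take $D=\Z$, $c=6$, $f=x^2+x$; then $f(r)$ is always even, so $(c,f(r))\subseteq(2)$ for every $r$, yet $f$ has zeros modulo infinitely many primes not dividing $6$. So no contradiction can be extracted from that information alone, and the vague final step (``should either yield a direct witness\ldots or produce a non-constant polynomial taking values in $D^\times$'') is precisely where the proof is missing. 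The standard trick is a different substitution: after translating so that $a_0=f(0)\ne 0$, write $f(x)=a_0+a_1x+\cdots+a_nx^n$ and consider
\[
f(ca_0x)=a_0\bigl(1+c\,k(x)\bigr),\qquad k(x)=a_1x+a_2ca_0x^2+\cdots+a_nc^{n-1}a_0^{n-1}x^n,
\]
where $k$ is non-constant since $D$ is a domain. Applying the d-ring property to the non-constant polynomial $1+ck(x)$ gives $M$ and $d$ with $1+ck(d)\in M$; then $c\notin M$ automatically, and $f(ca_0d)=a_0(1+ck(d))\in M$, so $f$ has a zero modulo $M$ away from $c$ --- no contraposition needed. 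The essential idea you are missing is that multiplying the argument by $ca_0$ packages ``$f$ vanishes'' and ``$c$ is a unit'' into the single polynomial $1+ck(x)$.
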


We conclude from Fact~\ref{d-ring-equiv} that every non-constant
polynomial in $\Z[x]$ has zeros modulo infinitely many primes.

\begin{lemma}[Anderson, Cahen, Chapman, and Smith~%
\cite{AndersonS-Cahen-Chapman-Smith:1995:fac-iv}]
$\Int(\Z)$ has no prime element.
\end{lemma}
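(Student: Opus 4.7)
The plan is to treat the constant and non-constant cases of a potential prime $p \in \Int(\Z)$ separately, and in each case to exhibit explicit $a, b \in \Int(\Z)$ witnessing $p \mid ab$ with $p \nmid a$ and $p \nmid b$.

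If $p$ is constant, then up to units it is a prime integer $q$, and the witnesses $a = x$, $b = (x-1)(x-2)\cdots(x-q+1)$ do the job: the product equals $q!\binom{x}{q} = q \cdot (q-1)!\binom{x}{q} \in q\cdot\Int(\Z)$, while $a(1) = 1$ and $b(0) = (-1)^{q-1}(q-1)!$ are both coprime to $q$, so neither factor is divisible by $q$ in $\Int(\Z)$.

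For non-constant $p$, write $p = f/D$ with $f \in \Z[x]$ and $D \in \Z_{>0}$. I would apply Fact~\ref{d-ring-equiv} to $f$ to pick a prime $q$ with $\gcd(q,D) = 1$ and an integer $n_0$ such that $q \mid f(n_0)$; since $\gcd(q,D) = 1$, this is equivalent to $q \mid p(n_0)$. After the substitution $x \mapsto x + n_0$ (an automorphism of $\Int(\Z)$), I may assume $n_0 = 0$. With $g(x) = (x-1)(x-2)\cdots(x-q+1)$, the proposed witnesses are $a = q$ and $b = pg/q$. Once $b \in \Int(\Z)$ is established, the identity $ab = pg$ gives $p \mid ab$ with quotient $g \in \Z[x]$; the two non-divisibilities are then easy, as $p \nmid q$ because $p$ is non-constant, and $p \nmid b$ because $b/p = g/q$ takes the non-integer value $g(0)/q = (-1)^{q-1}(q-1)!/q$ at $n = 0$ (the factorial being coprime to the prime $q$).

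The main obstacle is verifying $b \in \Int(\Z)$, equivalently $q \mid p(n) g(n)$ for every $n \in \Z$. This I would split by the residue of $n$ modulo $q$: if $n \not\equiv 0 \pmod q$, then some factor $n - k$ of $g(n)$ (with $1 \le k \le q-1$ and $k \equiv n \pmod q$) is itself divisible by $q$; if $n \equiv 0 \pmod q$, then the standard congruence $f(n) \equiv f(0) \pmod q$ for polynomials in $\Z[x]$ yields $q \mid f(n)$, and $\gcd(q,D) = 1$ then promotes this to $q \mid p(n) = f(n)/D$. The pay-off of choosing $q$ coprime to $D$ is precisely this last transfer of divisibility. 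An analogous construction, with $q$ replaced by a maximal ideal of $\cO_K$ coprime to the denominator ideal of $p$, extends the proof to $\Int(\cO_K)$.
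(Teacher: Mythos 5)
Your argument is correct and follows essentially the same route as the paper: the d-ring fact supplies a prime $q$ modulo which the numerator has a zero, an auxiliary product of linear factors over residue classes makes $pg/q$ integer-valued, and the candidate divides the resulting product but neither factor (your witnesses $q$ and $pg/q$ are a mild streamlining of the paper's $G+p$ and $hG/p$, and choosing $q$ coprime to the denominator $D$ rather than to the fixed divisor is a clean way to transfer $q\mid f(n)$ to $q\mid p(n)$). The only line to patch is the claim that a constant non-unit is, up to units, a prime integer: a composite constant is a product of two non-unit integers, hence reducible and a fortiori not prime, so that subcase is trivial but should be dispatched explicitly (the paper instead handles every constant $c$ at once via the product over a full residue system modulo $c$).
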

\begin{proof}
First, no constant can be prime, because, if $p \in \Z$ is a
non-zero non-unit, then $p$ divides $(x-r_1) \cdots (x-r_p)$, where
$r_1, \ldots, r_p$ is a complete set of residues modulo $p\Z,$
but, since $(x-r_i)/p$ is not integer-valued, $p$ does not divide
any individual linear factor.

Now consider $G \in \Int(\Z)$ non-constant, $G = \frac{g}{d}$
with $g(x) \in \Z[x]$ and $d \in \Z$.  Let $p\in \Z$ be prime such
that $g$ has a zero modulo $p$ but the polynomial function induced
by $g$ is not constant zero modulo $p$.
(Such a prime $p$ exists because $g$ has a zero modulo infinitely
many primes, but only finitely many primes divide the fixed divisor
of $g$.)

Since $g\in \Z[x]$, the residue class of $g(r)$ modulo $p$ depends
only on the residue class of $r$ modulo $p$.
Let $r_1,\ldots, r_k$ be a complete set of representatives of those
residue classes modulo $p$ on which $g$ takes a non-zero value
modulo $p$. Let $h(x)=\prod_{i=1}^k (x-r_i)$. Note that $p\dividesnot h(c)$
when $c$ is a zero modulo $p$ of $g$ (such as exist by assumption), so that
$\frac{h(x)}{p}$ is not integer-valued.

Then, both $\frac{h(x)G(x)}{p}$ and $\frac{(G(x)+p)h(x)}{p}$ are
integer-valued. This means that $G(x)$ divides $\frac{(G(x)+p)h(x)G(x)}{p}$,
but $G(x)$ divides neither $G(x)+p$ (not even in $\Q[x]$) nor
$\frac{h(x)G(x)}{p}$ (because $\frac{h(x)}{p}$ is not integer-valued).

\end{proof}

To generalize to $\Int(\cO_K)$, we note that $\cO_K$ is a
d-ring for every number field $K$:

\begin{fact}\label{Fact:d-ring}
\cite[Prop.~3, Corollary~2]{Gunji-Donald1976:divi}.
Let $D\subseteq R$ be domains, and $D$ a d-ring.
\begin{enumerate}
\item If $R$ is integral over $D$, then $R$ is a d-ring.
\item If $R$ is finitely generated as a ring over $D$, then $R$
is a d-ring.
\end{enumerate}
\end{fact}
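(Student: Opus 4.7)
The plan is to reduce each part to the d-ring property of $D$ applied to a suitable auxiliary polynomial: a norm of $f$ in part (i), and, by induction, a one-variable polynomial over $D$ in part (ii).

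For part (i), let $f(x)=a_nx^n+\cdots+a_0\in R[x]$ be non-constant. Set $R_0:=D[a_0,\ldots,a_n]\subseteq R$, which is integral and finitely generated over $D$, hence a finite $D$-module; because $R$ is integral over $R_0$, going-up lifts maximal ideals of $R_0$ to maximal ideals of $R$, and it suffices to find $r\in R_0$ with $f(r)\in R_0\setminus R_0^{\times}$. With $K:=\mathrm{Frac}(D)$, the ring $L:=R_0\otimes_D K$ is a finite-dimensional $K$-algebra that is a domain (as a localisation of the domain $R_0$), hence a field extension of $K$ of finite degree $m$. Form
\[
g(x):=N_{L/K}(f(x))\in K[x].
\]
The coefficients of $g$ are symmetric functions of the $K$-conjugates of the $a_i$, each integral over $D$, so $g\in D'[x]$, where $D'$ denotes the integral closure of $D$ in $K$; the leading coefficient $N_{L/K}(a_n)$ is nonzero because $a_n$ is a unit of the field $L$, so $g$ has degree $mn$ and is non-constant. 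Applying the d-ring property (bootstrapped to $D'$, or after multiplying $g$ by a nonzero element of $D$ to clear denominators) yields $d\in D$ with $g(d)$ a non-unit of $D'$; since norms of units in $R_0$ are units in $D'$, the element $f(d)$ must be a non-unit of $R_0$, hence contained in a maximal ideal of $R_0$.

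For part (ii), induct on the number of generators, reducing to $R=D[y]$. If $y$ is algebraic over $K$, choose $0\neq c\in D$ with $cy$ integral over $D$; then $R[1/c]$ is integral over $D[1/c]$, itself a d-ring since its maximal ideals are precisely those of $D$ not containing $c$ (and the characterisation in Fact~\ref{d-ring-equiv} furnishes a maximal ideal avoiding $c$ for any $g\in D[x]$ of positive degree), so part (i) supplies a maximal ideal of $R[1/c]$ containing a value of $f$, which descends to a maximal ideal of $R$. If $y$ is transcendental over $D$, then $R$ is a polynomial ring with $R^{\times}=D^{\times}$; writing a non-constant $f(x)\in D[y][x]$ as $\sum_i c_i(y)x^i$, either all $c_i$ lie in $D$, so that $f\in D[x]$ and the d-ring property of $D$ directly gives $d\in D$ with $f(d)$ in a maximal ideal of $D$ (which lifts to one of $R$), or some $c_i$ has positive $y$-degree, in which case --- since $D$ must be infinite to be a d-ring domain --- evaluation at a suitable $d\in D$ makes $f(d)\in D[y]$ non-constant in $y$ and hence a non-unit of $R$.

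The principal obstacle is the integral-closure subtlety in part (i): the norm $g$ \emph{a priori} sits in $D'[x]$ rather than $D[x]$, and one must argue either that $D'$ inherits the d-ring property from $D$, or that a suitable nonzero scalar multiple of $g$ lies in $D[x]$ while remaining non-constant. Once this is handled, part (i) is a direct norm argument, and part (ii) reduces to part (i) together with the elementary polynomial-ring evaluation.
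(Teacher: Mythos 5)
First, a point of reference: the paper does not prove this Fact at all --- it is quoted verbatim from Gunji--McQuillan --- so there is no in-paper proof to compare against, and your argument has to stand on its own. Its architecture (a norm argument plus lying-over for (i); induction on generators with an algebraic/transcendental dichotomy for (ii)) is the right one and is essentially the classical route, but two steps are genuinely incomplete. In part (i), the norm polynomial $g$ lies in $D'[x]$, where $D'$ is the integral closure of $D$ in $K$, and neither of the fixes you offer works as stated: ``bootstrapping to $D'$'' is circular, since the assertion that $D'$ is a d-ring is itself an instance of statement (i); and clearing denominators to get $cg\in D[x]$ and then invoking the d-ring property of $D$ only produces $d$ and $M$ with $cg(d)\in M$, which may hold simply because $c\in M$ and then says nothing about $g(d)$. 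The repair is the stronger characterization in Fact~\ref{d-ring-equiv}: the intersection of the maximal ideals of $D$ modulo which $cg$ has a zero is $(0)$, so one may choose such an $M$ with $c\notin M$, and then a maximal ideal of $D'$ lying over $M$ contains $cg(d)$ but not $c$, whence $g(d)\notin (D')^{\times}$; your norm-of-units observation then finishes (i), with the witness $d$ lying in $D$.

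In the algebraic case of part (ii) there is a second, unacknowledged gap: applying (i) to $R[1/c]\supseteq D[1/c]$ yields a witness $r\in D[1/c]$ and a maximal ideal of $R[1/c]$, and neither ``descends'' to $R$ as you claim --- $r$ need not lie in $R=D[y]$, and the contraction to $R$ of a maximal ideal of $R[1/c]$ need not be maximal. (Your description of the maximal ideals of $D[1/c]$ as exactly the maximal ideals of $D$ avoiding $c$ is also false outside the Jacobson setting, although $D[1/c]$ is indeed a d-ring by the same Fact~\ref{d-ring-equiv} argument you gesture at.) The fix is to observe that the witness produced in (i) can always be taken in the base ring, hence in $D\subseteq R$, and that an element of $R$ which is a non-unit of $R[1/c]$ is a fortiori a non-unit of $R$ and therefore lies in \emph{some} maximal ideal of $R$; no descent of ideals is needed. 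The transcendental case and the induction are fine. With these two repairs --- both of which amount to a systematic use of Fact~\ref{d-ring-equiv}(ii) to avoid a prescribed nonzero denominator --- your proof goes through.
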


Since $\Z$ is a d-ring, it follows by Fact~\ref{Fact:d-ring} that
$\cO_K$, too, is a d-ring.

\begin{lemma}\label{IntOKnoprimes}
Let $\cO_K$ be the ring of integers
in a number field $K$. Then $\Int(\cO_K)$ has no prime element.
\end{lemma}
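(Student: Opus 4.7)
The plan is to adapt the explicit argument just given for $\Int(\Z)$, splitting into constants and non-constants; the main new difficulty is that a maximal ideal of $\cO_K$ need not be principal.

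For a non-zero non-unit $c\in\cO_K$, I would pick representatives $r_1,\ldots,r_N$ of the finite residue ring $\cO_K/(c)$ and observe that $P(x)=\prod_{i=1}^{N}(x-r_i)$ takes values in $(c)$ on $\cO_K$, so $c\mid P$ in $\Int(\cO_K)$. Yet $c$ divides no individual factor $x-r_i$: evaluating $(x-r_i)/c$ at $r_i+1$ yields $1/c\notin\cO_K$. Hence no constant is prime.

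For a non-constant $G=g/d\in\Int(\cO_K)$ with $g\in\cO_K[x]$ and $d\in\cO_K\setminus\{0\}$, I would follow the $\Int(\Z)$ strategy. Since $\cO_K$ is a d-ring (Fact~\ref{Fact:d-ring}), $g$ has zeros modulo infinitely many maximal ideals of $\cO_K$, while only finitely many maximal ideals contain $\fd(g)$; since $\fd(g)\subseteq(d)$, any maximal ideal avoiding $\fd(g)$ also avoids $d$. So I can pick a maximal ideal $\mathfrak{m}$ with $g$ having a zero mod $\mathfrak{m}$, $\fd(g)\not\subseteq\mathfrak{m}$, and $d\notin\mathfrak{m}$.

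The real obstacle is that $\mathfrak{m}$ need not be principal. I would resolve this by choosing $\pi\in\mathfrak{m}\setminus\mathfrak{m}^2$, so $(\pi)=\mathfrak{m}\mathfrak{a}$ with $\mathfrak{a}$ coprime to $\mathfrak{m}$, and then (from $\mathfrak{a}+\mathfrak{m}=\cO_K$) picking $\alpha\in\mathfrak{a}$ with $\alpha\equiv 1\pmod{\mathfrak{m}}$. Let $s_1,\ldots,s_k$ represent the residue classes of $\cO_K/\mathfrak{m}$ on which $g$ does not vanish, and set $f(x)=\alpha(x-s_1)\cdots(x-s_k)$. A valuation-by-valuation check shows $fG/\pi\in\Int(\cO_K)$: at the prime $\mathfrak{m}$, for each $a\in\cO_K$ either some $a-s_i\in\mathfrak{m}$, or else $g(a)\in\mathfrak{m}$ and hence $G(a)\in\mathfrak{m}$ (using $d\notin\mathfrak{m}$); at any other prime, $\alpha$ already contributes the required valuation. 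On the other hand, $f/\pi\notin\Int(\cO_K)$: at any $a$ representing a residue class of $g$'s zeros mod $\mathfrak{m}$, neither $\alpha$ nor any $a-s_i$ lies in $\mathfrak{m}$, so $\pi\nmid f(a)$. Setting $A=G+\pi$ and $B=fG/\pi$, the identity $AB/G=fG/\pi+f$ shows $G\mid AB$, while $G\nmid A$ because $\pi/G\notin K[x]$ (as $\deg G>0$), and $G\nmid B$ because $B/G=f/\pi\notin\Int(\cO_K)$. Hence $G$ is not prime. The only delicate step is this valuation bookkeeping; everything else is a formal manipulation of the identities used in the $\Int(\Z)$ proof.
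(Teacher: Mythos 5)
Your argument is correct, and its skeleton is the paper's: constants are disposed of by the product $\prod_i(x-r_i)$ over a full residue system, non-constants via the d-ring property and the identity $G\mid (G+\pi)\cdot\frac{fG}{\pi}$ with $G\nmid G+\pi$ (degree reasons) and $G\nmid \frac{fG}{\pi}$ (because $f/\pi$ is not integer-valued). Where you genuinely diverge is in how you get around the non-principality of the chosen maximal ideal. The paper descends to the rational prime below: it picks an unramified $p\in\Z$ with $p\cO_K=P_1\cdots P_r$, no $P_i$ dividing $\fd(g)$ and $g$ vanishing somewhere modulo $P_1$, and builds one auxiliary polynomial $h$ whose roots cover, for \emph{every} $P_j$ simultaneously, the residue classes on which $g$ does not vanish; unramifiedness gives $\mathsf v_{P_j}(p)=1$, so $gh/p$ is integer-valued while $h/p$ is not, and the denominator $p$ is an honest element of $\Z$. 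You instead stay at a single maximal ideal $\mathfrak m$, take $\pi\in\mathfrak m\setminus\mathfrak m^2$ so that $(\pi)=\mathfrak m\mathfrak a$ with $\mathfrak a$ coprime to $\mathfrak m$, and neutralize the extraneous factor $\mathfrak a$ by inserting $\alpha\in\mathfrak a$, $\alpha\equiv 1\pmod{\mathfrak m}$, into $f$. Both devices do the same job: yours tracks only one prime at the cost of the correction factor $\alpha$; the paper's tracks several primes but needs no correction factor and no choice of uniformizer. Your valuation bookkeeping checks out; the only hypotheses worth stating explicitly are that $\fd(g)\ne(0)$ (so only finitely many maximal ideals contain it, making the choice of $\mathfrak m$ possible) and that both the vanishing and the non-vanishing residue classes of $g$ modulo $\mathfrak m$ are nonempty --- which is exactly what your two conditions on $\mathfrak m$ guarantee.
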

\begin{proof}
First, no constant can be prime, because, if $p \in \cO_K$ is a
non-zero non-unit, then $p$ divides $(x-r_1) \cdots (x-r_k)$, where
$r_1, \ldots, r_k$ is a complete set of residues modulo $p\cO_K,$
but $p$ does not divide any individual linear factor.

Now consider $G \in \Int(\cO_K)$ non-constant, $G(x) = \frac{g}{d}$
with $g(x) \in \cO_K[x]$ and $d \in \cO_K$. Because there are only
finitely many ramified primes and only finitely many primes
dividing the fixed divisor of $g$, there exists an unramified prime
$p \in \Z$, say  $p\cO_K = P_1 \cdot \ldots \cdot P_r$, such that
\begin{enumerate}
\item $g$ has a zero modulo $P_1$,
\item no $P_i$ divides the fixed divisor of $g$.
\end{enumerate}
Let $k$ be the maximal number of different residue classes modulo
any one $P_j$ on which $g$ assumes a non-zero value modulo $P_j$.
Choose $R = \{r_1, \ldots, r_k\} \subseteq \cO_K$ such that
\begin{enumerate}
\item for each $P_j$, $R$ contains a complete
set of representatives of the residue classes modulo $P_j$ on
which $g$ assumes a non-zero value modulo $P_j$
\item $g(r_i)$ is not zero modulo $P_1$ for any $r_i \in R$.
\end{enumerate}
Let $h = \prod_{i=1}^k(x-r_i)$, then
\[G \divides \frac{g(x)h(x)(g(x)+dp)}{dp} = G(x)\frac{h(x)(g(x)+dp)}{p},
\text{ but } \]
\[G \dividesnot \frac{g(x)h(x)}{dp}  \text{ and } G \nmid (g(x)+dp).\]
\end{proof}

\subsection{Absolutely irreducible elements}
Examples of absolutely irreducible elements of $\Int(\Z)$ include the
binomial polynomials
\[\binom xn= \frac{x(x-1)(x-2) \cdots (x-n+1)}{n!}\]
for $n >1$. If $n$ is prime, this is elementary, as
already McClain~\cite{McClain2004:thesis} remarked in her honor's thesis.
For general $n$, it is non-trivial and was shown by
Rissner and the fifth author~\cite{Rissner-Windisch2021:binom}.
Their result has been generalized to function fields by Tichy and the
fifth author~\cite{Tichy-Windisch2024:Carlitz}, but we are here
concerned with rings of integer-valued polynomials on number fields,
where we can provide quite elementary examples of absolutely
irreducible elements as follows:

\begin{lemma}\label{IntOKabsirred}
For any number field $K$, there exist absolutely irreducible
elements in $\Int(\cO_K)$.
\end{lemma}

\begin{proof}
Let $p \in \cO_K$ be an irreducible element with square-free
factorization into prime ideals; $p\cO_K = P_1 \cdot \ldots \cdot P_r$.
(Such an element exists because there are unramified primes and $\cO_K$
is atomic.)
Then let $q = \max_{1 \leq j \leq r}[\cO_K:P_j]$. W.l.o.g., $[\cO_K:P_1]=q$.
Let $r_1, \ldots, r_q$ be a complete system of residues modulo $P_1$,
containing one (but not more than one) complete system of residues
modulo $P_i$ for each $i$ and not containing a complete system of
residues modulo any other primes.
Set
\[ f(x) = (x-r_1) \cdots (x-r_q) \quad\text{ and }\quad
F(x) = \frac{f(x)}{p}.\]
We will show that $F$ is absolutely irreducible.

Suppose $F^m$ factors as $F^m = G_1\ldots G_s$, with each $G_i$ 
irreducible in $\Int(\cO_K)$, and $G_i=c_ig_i$ with $c_i\in K$, 
$g_i$ monic in $K[x]$.
Since $F^m$ is image-primitive, so is $G_i$ for each $i$.
This means that $c_i=(\fd(g_i))^{-1}$.

Let $v$ be the essential valuation corresponding to $P_1$, normalized
to have value group $\Z$. Then, in particular, $v(c_i) = - v(\fd(g_i))$,
therefore,
\[
\sum_{i=1}^s v(c_i) = v(p^{-m})=-m 
\quad\text{\ and\ }\quad
\sum_{i=1}^s v(\fd(g_i)) = m.
\]

This can only happen if each $g_i$ is a power of $f$; $g_i= f^{m_i}$ with 
$\sum_{i=1}^s m_i = m$.

To see this, consider that each $g_i$ is a product of monic linear 
polynomials $(x-r_j)$, $1\le j\le q$, where $r_1,\ldots, r_q$ form a
complete system of residues modulo $P_1$. For such a polynomial
$g=\prod_{j=1}^q(x-r_j)^{k_j}$, clearly $v(\fd(g))=\min_{1\le j\le q} k_j$.

Returning to $F^m = c_1g_1\ldots c_sg_s$, where 
$g_i=\prod_{j=1}^q(x-r_j)^{k_{ij}}$ then, of course, $\sum_{i=1}^s k_{ij}=m$
for each $j$, while, on the other hand, 
\[
m= \sum_{i=1}^s v(\fd(g_i)) = \sum_{i=1}^s \min_{1\le j\le q} k_{ij}.
\]
This implies that for each $1\le i\le s$ and $1\le h\le q$, necessarily
$k_{ih} = \min_{1\le j\le q} k_{ij}$, so that each linear factor occurs
in $g_i$ to the same exponent. If $m_i=\min_{1\le j\le q} k_{ij}$
then $g_i=f^{m_i}$.

Now $c_i=(\fd(g_i))^{-1}=p^{-m_i}$, and, therefore,
$G_i=c_ig_i=f^{m_i}p^{-m_i}= F^{m_i}$. As $c_ig_i=G_i$ was assumed irreducible,
$m_i=1$ follows.
\end{proof}

There are many other examples of absolutely irreducible elements in
$\Int(\cO_K)$, or more generally in $\Int(D),$ where $D$ is a
Dedekind domain with at least one finite residue field and
torsion class group~\cite[Corollary 8.9.]{Frisch-Nakato-Rissner2022:split}.

\subsection{Non-absolutely irreducible elements}
As an example of a non-absolutely irreducible element of $\Int(\Z)$,
consider
\[f = \frac{x(x^2+3)}{2}, \text{ noting that } f^2 =
 \frac{x^2(x^2+3)}{4} \cdot (x^2+3).\]
This example, taken from the third author's paper~\cite{SN2020:NonAbs}
on non-absolutely irreducible integer-valued polynomials,
generalises to $\Int(\cO_K)$ as follows.

\begin{lemma}\label{IntOKnonabs}
For any number field $K$, there exist
non-absolutely irreducible elements in $\Int(\cO_K)$.
\end{lemma}

\begin{proof}
Let $p \in \cO_K$ be an irreducible element with square-free
factorization into prime ideals; $p\cO_K = P_1 \cdot \ldots \cdot P_r$.
(Such an element exists because there are unramified primes and $\cO_K$
is atomic.) Then let $q = \max_{1 \leq j \leq r}[\cO_K:P_j]$. W.l.o.g., $[\cO_K:P_1]=q$.
Let $r_1, \ldots, r_q$ be a complete system of residues modulo $P_1$,
containing one (but not more than one) complete system of residues
modulo $P_i$ for each $i$ and not containing a complete system of
residues modulo any other primes.
Set $g(x) = (x-r_1)^2$ and $h(x) = (x-r_2) \cdots (x-r_q)$.

Let $H, G$ of the same degree as $g$ and $h$, respectively,
be irreducible in $K[x]$ and non-associated in $K[x]$,
such that for any product of copies of $g$ and $h$, the fixed divisor
is the same as that of any modified product in which some copies of
$g$ have been replaced by $G$ and some copies of $h$ by $H$.
(That such $G$ and $H$ exist has been shown by some of the present
authors together with
R.~Rissner~\cite[Lemma 3.3]{Frisch-Nakato-Rissner2019:Sets-of-lengths}.)

Let \[F(x) = \frac{G(x)H(x)}{p}.\]
Then $F$ is irreducible in $\Int(\cO_K)$, but not
absolutely irreducible, because
\[F^2 = \frac{G(x)H(x)^2}{p^2} \cdot G(x).\]
\end{proof}

Regarding non-absolutely irreducible elements of $\Int(\cO_K)$,
we can likewise generalize examples where the $n$-th power of an
irreducible element has factorizations of length other than $n$
(for instance, \cite[Example 4.4]{SN2020:NonAbs}) from $\Int(\Z)$
to $\Int(\cO_K)$ by using
\cite[Lemma 3.3]{Frisch-Nakato-Rissner2019:Sets-of-lengths} as in
the above proof. To summarize:

\begin{theorem}\label{ppm}
For any number field $K$, the ring of integer-valued polynomials
on algebraic integers,
\[\Int(\cO_K) = \{ f\in K[x] \mid f(\cO_K)\subseteq \cO_K\},\]
is a ring without prime elements containing both absolutely
irreducible and non-absolutely irreducible elements.
\end{theorem}

\begin{proof}
The non-existence of primes is shown in Lemma~\ref{IntOKnoprimes};
the existence of absloutely irreducible elements in 
Lemma~\ref{IntOKabsirred}, and the existence of non-absolutely irreducible
elements in Lemma~\ref{IntOKnonabs}.
\end{proof}

\section{Rings with non-absolutely irreducible elements and primes,
but no other absolutely irreducible elements}\label{section:NAPNoA}
Examples of atomic domains that have no absolutely irreducible elements,
but contain both prime elements and non-absolutely irreducibles arise for 
instance from the D+M construction~\cite{Anderson-Zafrullah1991:rings}.
\begin{example}\label{pmp}
Let $R = K_1 + xK_2[x]$, where $K_1 \subsetneq K_2$ are fields. Then
$R$ is atomic and its irreducible elements are of the form
\begin{enumerate}
\item $ax$, where $a \in K_2$ or
\item $a(1 + xf(x))$, where $a \in K_1$, $f(x) \in K_2[x]$,
 and $1 + xf(x)$ is irreducible in $K_2[x]$, 
see~\cite{Anderson-Zafrullah1991:rings}.
\end{enumerate}
Furthermore, every irreducible of the form 
$a(1 + xf(x))$ is prime~\cite{Anderson-Zafrullah1991:rings}.
The irreducible elements of the form $ax$ are not absolutely irreducible,
because
 \begin{equation*}
(ax)^2 = acx \cdot ac^{-1}x
\end{equation*}
with $c \in K_2 \setminus K_1$.
\end{example}

With a bit more effort, we can even construct Noetherian semilocal integral domains of this type.

\begin{proposition}\label{prop:intersection}\label{pmpOne}
Let $D$ and $V$ be one-dimensional Noetherian local integral domains
that have a common quotient field $L$, and set $R = D \cap V$. 
Suppose further that the following hold:
\begin{enumerate}
\item\label{a}
$V$ is a discrete rank one valuation ring of $L$.
\item\label{b}
There exists a prime element $\pi$ of $V$ that is also a prime element of $R$.
\footnote{Of course, the prime element $\pi$ of $V$ is unique up to associativity \emph{in $V$}.
In the course of the proof we will see $\pi V \cap R = \pi R$,  so all associates of $\pi$ in $V$ that are contained in $R$ are also associated to $\pi$ \emph{in} $R$.}
\item\label{c} 
All the valuation rings that appear as localizations of the 
integral closure of $D$ are distinct from $V$.
\item\label{d}
 $D$ is not a unique factorization domain.
\item\label{e}
There are at least two non-associated irreducible elements of $D$ that are 
also irreducible in $R$.
\end{enumerate}

Then $R$ is a one-dimensional Noetherian integral domain \textup(so, in
particular, an atomic domain\textup) that has exactly two maximal ideals 
and precisely one prime element up to associativity, namely $\pi$. In addition, it has 
irreducible elements distinct from $\pi$ and all of these are 
not absolutely irreducible.
\end{proposition}

\begin{proof}
First, for the sake of completeness, we want to recall the full 
argument why $D$ cannot be contained in $V$ (which will be needed 
later in the proof). Assume to the contrary that $D \subseteq V$.
Since $V$ is integrally closed, the integral closure $\overline{D}$
of $D$ is also contained in $V$.

Now there are two cases. In case $\pi V \cap \overline{D} = (0)$, 
all  non-zero elements of $\overline{D} \setminus \{0\}$ are invertible
in $V$ and hence 
$L = (\overline{D} \setminus \{0\})^{-1} \overline{D} \subseteq V$,
which is a contradiction. 
Otherwise, $\pi V$ lies over a maximal ideal of $\overline{D}$ and 
hence $\overline{D}_{\pi V \cap \overline{D}} = V$. This is in 
contradiction to the assumption \ref{c}. So, in total, we infer 
that $D$ is not contained in $V$.

Denote by $M$ the maximal ideal of $D$.
We are now in the situation 
of~\cite[Theorem 3]{Prekowitz1973:Intersections}, which implies that 
$D = R_{M \cap R}$ and $V = R_{\pi V \cap R}$.
Note that the prime ideals $M \cap R$ and $\pi V \cap R$ are not 
comparable by set-theoretic inclusion because, otherwise, $D$ or $V$
could not be a one-dimensional ring.
So, we can apply~\cite[Theorem 105]{Kaplansky1970:commutative} and 
get that $M \cap R$ and $\pi V \cap R$ are exactly the maximal ideals 
of $R$. In particular, using the prime ideal correspondence under 
localization, $R$ is a one-dimensional domain.

Because $R$ is one-dimensional and $\pi$ is a prime element of $R$ by assumption, we must have $\pi R = \pi V \cap R$.
In particular, $\pi \not \in M$, and so $\pi$ is a unit of $D$ but a non-unit of $V$. Thus we are able to 
use~\cite[Corollary 1.20]{Heinzer-Ohm:Noetherian} and infer that 
$R$ is Noetherian (and therefore atomic).

It is now left to show that the irreducible elements of $R$ not associated to 
$\pi$ are not absolutely irreducible. Since $\pi$ is prime in $R$,
all these irreducibles lie in $(M\cap R) \setminus (\pi V \cap R)$.
Now, \ref{e} says that there are at least two of them and hence, by 
\cite[Lemma 2.1]{ChKr2012:Atomic-decay} they cannot be absolutely 
irreducible.
\end{proof}

We now give concrete examples of integral domains $D$ and $V$ as 
in~\ref{prop:intersection}. 
In particular, we thus find another atomic integral domain that has
prime elements and irreducible elements that are not absolutely 
irreducible, but has no absolutely irreducible elements that are not prime.

\begin{example}
Let $K$ be any field in which $-1$ is not a square, for instance
$K = \mathbb Q$, and let $X$ be an indeterminate over $K$. Define 
$D$ and $V$ as the localizations
\[
D = K[X^2,X^3]_{(X^2,X^3)}, \ V = K[X]_{(X^2+1)},
\]
and set $R = D \cap V$.

Note that $(X^2+1)$ is indeed a prime ideal of $K[X]$ since $-1$ is 
not a square in $K$.
Both, $D$ and $V$, are one-dimensional Noetherian local integral domains
and their quotient field is just the function field $K(X)$.
In order to see this for $D$, just note that $K[X^2, X^3]$ is a 
numerical semigroup algebra and hence one-dimensional and Noetherian.

Moreover, $V$ clearly is a discrete rank one valuation ring with prime 
element $\pi = X^2 +1$. The integral closure of $D$ is the valuation ring 
$K[X]_{(X)}$ that is indeed distinct from $V$.

Next we show that $X^2+1$ is also a prime element of $R$.
For this, we can just argue that it generates the prime ideal
$P = \mathfrak{m}_V \cap R$, where $\mathfrak{m}_V$ 
denotes the unique maximal ideal of $V$.

Let $g \in P$. As $g$ is, a fortiori, an element of $D$, we can write 
it in the form $g = \frac{h}{s}$, where $h,s \in K[X^2,X^3]$ and 
$s \notin (X^2,X^3)K[X^2,X^3]$. On the other hand, $g$ is also in
$\mathfrak{m}_V$ and we can therefore write it as 
$g = \frac{a}{b}(X^2+1)$ with $a, b \in K[X]$ and $b \notin (X^2 +1)K[X]$.

It is our goal to show that $X^2 + 1$ divides $g$ in $R$. 
Since $\frac{a}{b} \in V$ by its choice, it suffices to prove that
$\frac{a}{b} \in D$. In order to do this, we clear denominators in 
the equation $\frac{h}{s} = \frac{a}{b} (X^2 +1)$ and arrive at
\[
bh = sa (X^2 +1)
\]
that we can view as an identity in $K[X]$. 
Since $X^2 +1 $ does not divide $b$ in $K[X]$, it has to divide $h$ 
and the unique cofactor is $\frac{sa}{b} \in K[X]$. 
Since $h \in K[X^2,X^3]$ has no linear term, it follows from
$h = \frac{sa}{b} (X^2 + 1)$ that the polynomial $\frac{sa}{b}$ 
also has no linear term, that is, $\frac{sa}{b} \in K[X^2,X^3]$. 
By choice, $s \in K[X^2,X^3] \setminus (X^2,X^3)K[X^2,X^3]$ and 
hence $\frac{a}{b} = \frac{1}{s} \cdot \frac{sa}{b} \in D$.

As a last step, we argue that the elements $X^2$ and $X^3$ are still 
irreducible in $R$. They are both elements of 
$R \cap (X^2,X^3)K[X^2,X^3]$ and, therefore, non-units. 
In the following, for an irreducible polynomial $p$ of $K[X]$, we 
denote by $\mathsf{v}_p$ the $p$-adic valuation on $K(X)$.

We carry out the argument for $X^2$; it is then analogous for $X^3$. 
So, decompose $X^2 = f \cdot g$ where $f$,~$g \in R$. We want to show that 
either $f$ or $g$ is a unit of $R$. The elements of $D=K[X^2,X^3]_{(X^2,X^3)}$
(and therefore those of $R$) either have $X$-adic valuation $0$ or 
$\geq 2$. 
Since $\mathsf{v}_X(f) + \mathsf{v}_X(g) = \mathsf{v}_X(X^2) = 2$, 
we can assume without loss of generality that $\mathsf{v}_X(f) = 2$ 
and $\mathsf{v}_X(g) =0$.

Furthermore, as elements of $V=K[X]_{(X^2+1)}$, the $(X^2+1)$-adic
valuation of $f$ and $g$ is $\geq 0$. Hence, the equality
$0 = \mathsf{v}_{X^2+1}(X^2) = \mathsf{v}_{X^2+1}(f) + \mathsf{v}_{X^2+1}(g)$
implies that $\mathsf{v}_{X^2+1}(g) = 0$. To conclude, $g$ is an element 
of $R$ that is in neither of the two maximal ideals of $R$ and therefore a unit.
This finishes the example.
\end{example}

\section{Rings with prime elements, absolutely irreducible elements that 
are not prime, and non-absolutely irreducible elements}\label{section:PANA}

For an atomic domain that has prime elements, absolutely irreducible 
elements that are not prime, and non-absolutely irreducible elements, 
we consider certain Krull domains and subrings of the ring of
integer-valued polynomials.

\begin{example}\label{pana}
Let $R = \Int(\Z) \subseteq \Q[x]$ be the ring of integer-valued 
polynomials as in section~\ref{section:ANANP} 
(or any atomic domain satisfying ACCP and having 
both absolutely irreducible and non-absolutely irreducible elements).

We show that $T = R[y]$, the polynomial ring in one indeterminate over 
$R$, is an atomic domain that has primes, as well as both absolutely and 
non-absolutely irreducible elements.

Since $R$ is an integral domain, $y$ is a prime element of $T$.

Again because $R$ is an integral domain, by degree considerations,
the set of polynomials of degree zero in $R[y]$ (which we may 
identify with $R\setminus\{0\}$) forms a saturated (i.e., divisor 
closed) multiplicatively closed subset of $R[y]$ containing all the 
units of $R[y]$.

A constant in $R[y]$, therefore, is a unit, or absolutely irreducible, 
or non-absolutely irreducible, in $R[y]$ if and only if it has the 
respective property in $R$. 

Since $R$ contains both absolutely and non-absolutely irreducible
elements, $R[y]$ contains both absolutely and non-absolutely 
irreducible elements (namely, the constants with the respective
property in $R$), as well as a prime element, $y$.

Also, since $R$ satisfies ACCP, so does $R[y]$, and $R[y]$ is,
therefore, atomic.
\end{example}

\begin{proposition}\label{pana1}
For each prime number $p$, the ring
\[R(p) = \left\{\,\frac{g}{p^n} \in \Int(\Z) 
\text{ with } g \in \Z[X] \text{ and } n \in \N_0 \,\right\}\]
has infinitely many prime elements. 
Furthermore, $R(p)$ has both absolutely irreducible
elements that are not prime and non-absolutely irreducible elements.
\end{proposition}

\begin{proof}
First, each prime number $q \ne p$ is prime in $R(p)$:
If $q$ divides a product $(g_1/p^{n_1}) \cdots (g_r/p^{n_r})$ in $R(p)$, 
then $q$ divides $g_1 \cdots g_r$ in $\Z[X]$.
Because $\Z[X]$ is a UFD with $q$ prime, without restriction $g_1=q f_1$ with $f_1 \in \Z[X]$.
For all $a \in \Z$, also $b \coloneqq (q f_1(a))/p^{n_1} \in \Z$, and since $q \neq p$, it follows that $ f_1(a)/p^{n_1} \in \Z$, showing $f_1 / p^{n_1} \in R(p)$.
We have therefore shown that $q$ divides $g_1/p^{n_1}$ in $R(p)$, so $q$ is prime in $R(p)$.

Now, let $r_1, \ldots, r_p$
be a complete system of residues modulo $p$ and set
\[f(x) = \frac{(x-r_1) \cdots (x-r_p)}{p}.\]
Then $f$ is absolutely irreducible in $R(p)$. 
The polynomial $f$ is not prime because $f$
divides $(x-r_1) \cdots (x-r_p)$
but it does not divide any individual linear factor.

Furthermore, let $a_1, \ldots, a_p, b_{p+1}, \ldots, b_{p^2}$ be
a complete system of residues modulo $p^2$ with 
$b_i \not \equiv 0 \Mod{p}$ for $p+1 \leq i \leq p^2$. 
 Let $c_1, c_2 \in \Z$ such that $c_1 \equiv c_2 \equiv 0 \Mod{p^2}$ 
and $c_1 \neq c_2$.
Set $g(x) = \prod_{k = p + 1}^{p^2}(x - b_k)$ and
\[f(x) = \frac{g(x)(x-c_1)(x-c_2)^{e-1}}{p^e},\]
where $e = \mathsf v_p(p^2!) = p+1$. 
Then $f$ is irreducible in $R(p)$, but not absolutely irreducible, because
\[f^2 =
 \frac{g(x)(x-c_1)^2(x-c_2)^{e-2}}{p^e} \cdot \frac{g(x)(x-c_2)^{e}}{p^e}\]
is a factorization of $f^2$ essentially different from $f \cdot f$.
\end{proof}

The non-absolutely irreducible elements of $R(p)$ whose $n$-th power 
has factorizations of different lengths can be constructed by adapting 
known examples in $\Int(\Z)$ \cite[Example 4.1 \& Example 4.4]{SN2020:NonAbs}.

\begin{fact} \label{f:algint}
Let $\mathcal{O}_K$ be the ring of integers of a number
field $K$. Then the following hold.
\begin{enumerate}
\item $\mathcal{O}_K$ has infinitely many prime elements.
\item If $\mathcal{O}_K$ is not a unique factorization domain, then
$\mathcal{O}_K$ has absolutely irreducible elements that are not prime,
and non-absolutely irreducible elements, see
\cite[Theorem 3.1]{ChKr2012:Atomic-decay} or Corollary~\ref{c:krull-monoid-all-absirred} below.
\end{enumerate}
\end{fact}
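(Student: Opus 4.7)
The plan is to handle \textup{(i)} and \textup{(ii)} separately, both times using that $\mathcal{O}_K$ is a Dedekind domain with finite ideal class group $G$.

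For \textup{(i)}, I would identify prime elements of $\mathcal{O}_K$ with principal prime ideals, so the task reduces to producing infinitely many principal primes. This is supplied by the Chebotarev density theorem applied to the Hilbert class field $H_K/K$: the prime ideals of $\mathcal{O}_K$ lying in the trivial ideal class are exactly those that split completely in $H_K$, and they have Dirichlet density $1/h_K$, so there are infinitely many. Distinct principal prime ideals yield non-associated prime elements.

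For \textup{(ii)}, assume $G$ is non-trivial and fix a non-zero $g \in G$ of order $n \geq 2$. Since every class of $G$ contains infinitely many prime ideals, Theorem \ref{t:krull-transfer} supplies a block homomorphism $\theta\colon \mathcal{O}_K \setminus \{0\} \to \mathcal{B}(G)$, and one may work either in the block monoid (invoking Lemma \ref{th-absirred:forward} together with the converse from Corollary \ref{c:krull-monoid-all-absirred} to transfer absolute irreducibility in both directions) or directly with ideals. I prefer the ideal-theoretic approach. To produce an absolutely irreducible non-prime, pick a prime ideal $\mathfrak{p}$ with $[\mathfrak{p}] = g$ and let $\pi$ generate the principal ideal $\mathfrak{p}^n$. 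Any factorization of $\pi^m$ corresponds to decomposing $\mathfrak{p}^{nm}$ into principal divisors, which must have the form $\mathfrak{p}^{kn}$ (as $n$ is the exact order of $g$); irreducibility forces $k=1$, so every factorization collapses to $\pi \cdots \pi$. Since $\mathfrak{p}^n$ is not a prime ideal, $\pi$ is not prime.

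For a non-absolutely irreducible element, I would take two distinct prime ideals $\mathfrak{p} \neq \mathfrak{q}$ with $[\mathfrak{p}] = g$ and $[\mathfrak{q}] = -g$, and let $\pi$, $\alpha$, $\beta$ generate the principal ideals $\mathfrak{p}\mathfrak{q}$, $\mathfrak{p}^n$, $\mathfrak{q}^n$ respectively; each is irreducible by the argument above. Then
\[
\pi^n \mathcal{O}_K = \mathfrak{p}^n \mathfrak{q}^n = \alpha \beta \, \mathcal{O}_K,
\]
so $\pi^n \sim \alpha \beta$ is a second factorization essentially different from $\pi \cdots \pi$: the lengths differ when $n > 2$, and when $n = 2$ the two factorizations have the same length but $\pi \not\sim \alpha$ because their ideal supports differ.

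The main obstacle is bookkeeping: verifying that all generators picked are genuinely irreducible, which requires that $g$ has exact order $n$ and that $\mathfrak{p} \neq \mathfrak{q}$, together with a clean invocation of the converse to Lemma \ref{th-absirred:forward} if one prefers the block-monoid route. Beyond the standard Dedekind machinery, the only non-trivial external input is Chebotarev's density theorem, which both ensures a supply of principal primes for \textup{(i)} and populates every ideal class with infinitely many primes for \textup{(ii)}.
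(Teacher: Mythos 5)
Your proposal is correct. Note, however, that the paper does not actually prove this statement: it is labelled a \emph{Fact} and disposed of by citation, namely to Chapman--Krause for part (ii) (or, internally, to Corollary~\ref{c:krull-monoid-all-absirred} for the existence of non-absolutely irreducible elements, with the Angerm\"uller-type theorem supplying the absolutely irreducible non-primes), while part (i) is treated as standard. Your route is therefore genuinely more explicit: you replace the Krull-monoid machinery (block homomorphism, Theorem~\ref{t:krull-allai}, Lemma~\ref{l:bg-all-absirred}) by a direct ideal-theoretic construction --- a generator of $\mathfrak p^n$, with $n$ the order of $[\mathfrak p]$, is absolutely irreducible but not prime, and a generator of $\mathfrak p\mathfrak q$ with $[\mathfrak q]=-[\mathfrak p]$ is irreducible but not absolutely irreducible via $\pi^n\sim\alpha\beta$. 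All the small verifications you flag do go through: principality of $\mathfrak p^j$ forces $n\mid j$, so the only irreducible divisor of $\pi^m$ up to associates is $\pi$ itself; and in the case $n=2$ the two factorizations of $\pi^2$ are distinguished by their ideal supports, exactly as you say (when $g=-g$ you need two distinct primes in the same class, which Chebotarev provides). What the paper's citation buys is generality --- the corollary applies to any Krull monoid with $G=G_0$ --- whereas your argument buys self-containedness and makes visible \emph{which} elements witness each phenomenon; it is in substance the classical argument underlying the Chapman--Krause theorem and Proposition~\ref{p:krull-ai}. The only external input you use beyond Dedekind-domain basics is Chebotarev (or any theorem guaranteeing infinitely many primes in each ideal class), which is also implicitly needed for part (i).
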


\begin{example}\label{pana2}
Let $d = -ec,$ where $e, c \in \Z, e, c \geq 2, d$ is square-free and $d \not\equiv 1 \Mod{4}.$ Set $R = \Z[\sqrt{d}]$. Firstly, the element $\sqrt{d} = \sqrt{-ec}$ is irreducible in $R$,
but not absolutely irreducible, because $(\sqrt{d})^2 = -e \cdot c$ is a
non-trivial factorization of $(\sqrt{d})^2$ (not necessarily into irreducibles).

Secondly, let $r = a + b\sqrt{d} \in R$ and $N(r) = a^2 - db^2$,
the norm of $r$. 
Then 2 is irreducible in $R$ because
$N(2) = 4$ and $N(r) \neq 2$ for all $r \in R$ (because $d \le -6$).
The element $2$ is not prime in $R$ because
\begin{enumerate}
\item 
if $d \equiv 2 \Mod{4}$, then $2 \dividess (\sqrt{d})^2$, but
$2 \dividenot \sqrt{d}$, and
\item
if $d \equiv 3 \Mod{4}$, then 
$2 \dividess (1 + \sqrt{d})(1 - \sqrt{d})$, but 2 does not
divide the individual factors.
\end{enumerate}

More generally, by \cite[Theorem 25]{Marcus1977NF}, the prime decomposition
of $2R$ is
\begin{enumerate}
\item $2R = (2, \sqrt{d})^2$ if $d \equiv 2 \Mod{4}$, and
\item $2R = (2, 1 + \sqrt{d})^2$ if $d \equiv 3 \Mod{4}$.
\end{enumerate}
It follows by \cite[Theorem 3.1]{ChKr2012:Atomic-decay} that $2$ is
absolutely irreducible in $R$.

Lastly, it follows by \cite[Theorem 25]{Marcus1977NF} that every odd prime
$p$ such that $p \dividenot d$ and 
$d^{\frac{p-1}{2}} \equiv -1 \Mod{p}$, is prime in $R$.
\end{example}


\section{Absolutely Irreducible Elements in Krull monoids}

The examples in Sections~\ref{section:NPA} and~\ref{section:PA} will be constructed using Dedekind domains.
The multiplicative monoid of non-zero elements of a Dedekind domain is a Krull monoid \cite[Chapter 2.10]{GeHa2006:NonUniq}. 
To lay the groundwork for Sections~\ref{section:NPA} and~\ref{section:PA}, we therefore first prove some results on absolute irreducible elements in Krull monoids, culminating in Theorem~\ref{t:krull-allai}, which forms the basis of Propositions~\ref{Prop:withoutprime} and~\ref{prop:exm-allai-but-not-ufd}.

As corollaries we also recover a generalization of a theorem of Chapman and Krause (Corollary~\ref{c:krull-monoid-all-absirred}) and a theorem of Angermüller (Corollary~\ref{c:angermueller}).
Corollary~\ref{c:angermueller} in particular rules out the existence of Krull monoids having non-prime irreducibles, but having every absolutely irreducible prime.
This shows that no example as in Section~\ref{section:NAPNoA} is possible among Krull monoids.
Corollary~\ref{c:krull-monoid-all-absirred} shows that, if every class of the class group of a Krull monoid $H$ contains a prime divisor and there exist no non-absolutely irreducible irreducibles, then $H$ is already factorial.
This explains why in Sections~\ref{section:NPA} and~\ref{section:PA} we have to construct domains in which the set of classes containing prime divisors is a proper subset of the entire class group.

In this section, let $H$ be a Krull monoid with class group $G$.
Let $G_0 \subseteq G$ denote the set of classes containing prime 
divisors, and let $G_1 \subseteq G_0$ denote the set of classes 
containing \emph{exactly} one prime divisor.

Absolutely irreducible elements in Krull monoids have been characterized 
in various ways.
The following extends \cite[Proposition 7.1.4]{GeHa2006:NonUniq}
(where $G=G_0$ is assumed) and parts of 
\cite[Proposition 4.7]{Grynkiewicz2022} 
(where $H=\cB(G_0)$ with $G$ torsion-free).
A similar characterization is given in \cite[Lemma 5]{Angermueller2020}.

\begin{proposition} \label{p:krull-ai}
Let $\emptyset \ne S \subseteq \mathfrak X(H)$ be a finite set.
The following statements are equivalent.
  \begin{enumerate}
\item
\label{krull-ai:ai} There exists an absolute irreducible 
$a \in H$ with $\supp(aH)=S$.
\item 
\label{krull-ai:min} The set $S$ is minimal in
 $\{\, \supp(bH) : b \in H\setminus H^\times \,\}$
  \item \label{krull-ai:minirred} 
The set $S$ is minimal in 
$\{\, \supp(bH) : b \in H \text{ is irreducible} \,\}$.
\item\label{krull-ai:zss}
The family $([\fp])_{\fp \in S}$ in $G$ is $\Z_{\ge 0}$-linearly 
dependent, and every proper subfamily is $\Z_{\ge 0}$-linearly independent.
\item\label{krull-ai:indep}
The family $([\fp])_{\fp \in S}$ in $G$ is $\Z_{\ge 0}$-linearly
dependent, and every proper subfamily is $\Z$-linearly independent.
  \end{enumerate}
In case the equivalent conditions hold, the absolutely irreducible 
element with support $S$ is uniquely determined up to associativity.
\end{proposition}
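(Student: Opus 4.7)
The plan is to work throughout with the abelian group
\[
L = \Bigl\{\, (x_\fp)_{\fp \in S} \in \Z^S \ \Big|\ \sum_{\fp \in S} x_\fp [\fp] = 0 \, \Bigr\}
\]
and its positive cone $L_+ = L \cap \N_0^S$. The Krull structure of $H$ gives a dictionary: each $(n_\fp) \in L_+$ corresponds to the divisorial ideal $\prod_\fp \fp^{n_\fp}$, which is principal precisely because its ideal class is trivial. Under this dictionary, non-units $b$ with $\supp(bH) \subseteq S$ correspond to nonzero elements of $L_+$, and irreducibles of $H$ correspond to irreducible elements of the monoid $L_+$. The equivalences (ii) $\Leftrightarrow$ (iii) $\Leftrightarrow$ (iv) then reduce to routine bookkeeping: every non-unit factors into irreducibles, so minimality of $S$ among supports of non-units is the same as minimality among supports of irreducibles and the same as the $\Z_{\ge 0}$-condition on $L_+$.

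The main step is (iv) $\Rightarrow$ (v) (the converse is trivial), and I expect this to be the main obstacle. My plan is to show that under (iv) the $\Z$-module $L$ has rank $1$. First, any nonzero $v \in L_+$ must have $\supp(v) = S$ (otherwise the nonzero coordinates of $v$ furnish a $\Z_{\ge 0}$-relation on a proper subfamily of $([\fp])_{\fp \in S}$); so $L_+ \setminus \{0\}$ consists of vectors with strictly positive entries and admits a unique primitive generator $(n_\fp)_{\fp \in S}$, all $n_\fp > 0$. Now, for contradiction, suppose some $v \in L$ is not a rational multiple of $(n_\fp)$; after possibly replacing $v$ by $-v$, pick $\fp_0 \in S$ maximizing $v_{\fp_0}/n_{\fp_0}$ among indices with $v_{\fp_0} > 0$, and set
\[
w = v_{\fp_0} \, (n_\fp)_{\fp \in S} - n_{\fp_0} \, v \in L.
\]
By construction $w_{\fp_0} = 0$; for indices with $v_\fp > 0$, the maximality of the ratio gives $w_\fp \ge 0$; and for indices with $v_\fp \le 0$, one has $w_\fp \ge v_{\fp_0} n_\fp > 0$. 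Thus $w \in L_+$, and $w \neq 0$ since $v$ is not proportional to $(n_\fp)$. But $w_{\fp_0} = 0$, which violates the first observation. Hence $L$ has rank $1$, and because $(n_\fp)$ has no zero entries, any relation supported on a proper subfamily must vanish — which is precisely (v).

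To finish, (v) $\Rightarrow$ (i): let $(n_\fp)$ be the primitive positive generator of $L_+$ and write $\prod_\fp \fp^{n_\fp} = aH$, so $\supp(aH) = S$. Since $L_+ = \N_0 (n_\fp)$, any decomposition $a = a_1 a_2$ splits $(n_\fp)$ into two nonneg integer multiples of itself, one of which must vanish, so $a$ is irreducible; the same rank-$1$ argument applied to any factorization $a^N = b_1 \cdots b_m$ into irreducibles forces each $b_j H = aH$, hence $b_j \sim a$, proving absolute irreducibility; uniqueness of $a$ up to associates is transparent from this construction. For the closing implication (i) $\Rightarrow$ (ii): if $a$ is absolutely irreducible with $\supp(aH) = S$ and some $S' \subsetneq S$ supports a non-unit $b$ with $bH = \prod_{\fp \in S'} \fp^{m_\fp}$, then for $N$ sufficiently large the divisorial ideal $\prod_{\fp \in S'} \fp^{N n_\fp - m_\fp} \prod_{\fp \in S \setminus S'} \fp^{N n_\fp}$ has strictly positive exponents and trivial class, so equals $cH$ with $\supp(cH) = S$. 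Thus $a^N = u b c$ for some unit $u$, and factoring $c$ into irreducibles yields an irreducible factorization of $a^N$ containing $b$ as a factor; but $b \not\sim a$ since $\supp(bH) \ne \supp(aH)$, contradicting absolute irreducibility.
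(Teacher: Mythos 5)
Your proposal is correct and follows essentially the same route as the paper: the equivalences (ii)--(iv) via the dictionary between supports and the lattice $\ker(\sigma)$, the implication (iv)$\,\Rightarrow\,$(v) via the same extremal-ratio trick that produces a nonnegative relation vanishing at one coordinate, and (v)$\,\Rightarrow\,$(i) by showing $\ker(\sigma)$ is free of rank one with a strictly positive generator. The only cosmetic difference is that you phrase the key step as ``$L$ has rank $1$'' and assert the ``unique primitive generator'' slightly before it is justified, but the argument only needs a fixed nonzero element of $L_+$ there, so nothing is lost.
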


\begin{proof}
\ref{krull-ai:ai}$\,\Rightarrow\,$\ref{krull-ai:min}
If $\supp(bH) \subsetneq S$, then $b \mid a^n$  for some $n \ge 1$ and $b$ is not associated to $a$.

\ref{krull-ai:min}$\,\Rightarrow\,$\ref{krull-ai:minirred}
Trivial.

\ref{krull-ai:minirred}$\,\Rightarrow\,$\ref{krull-ai:min}
Suppose there exists $b \in H \setminus H^\times$ with 
$\supp(bH) \subsetneq S$.
Let $u \in H$ be an irreducible element dividing $b$.
Then $\supp(uH) \subsetneq S$.

\ref{krull-ai:min}$\,\Leftrightarrow\,$\ref{krull-ai:zss}
Statement \ref{krull-ai:zss} is just a explicit way of stating 
\ref{krull-ai:min}.

\ref{krull-ai:zss}$\,\Rightarrow\,$\ref{krull-ai:indep}
Fix a non-zero vector $(\alpha_\fp)_{\fp \in S} \in \Z_{\ge 0}^S$ 
such that $\sum_{\fp \in S} \alpha_\fp [\fp] =0$.
Suppose, for the sake of contradiction, that there exists $S' \subsetneq S$ and a non-zero 
vector $(\beta_\fp)_{\fp \in S'} \in \Z^{S'}$ such that 
$\sum_{\fp \in S'} \beta_\fp [\fp] = 0$.
By the minimality of $S$, there must exist $\fp \in S'$ with 
$\beta_\fp < 0$.
Let $\fq \in S'$ be such that $\beta_\fq < 0$ and so that 
$\alpha_\fq / \abs{\beta_\fq}$ is minimal among all
$\alpha_\fp / \abs{\beta_\fp}$ with $\beta_\fp < 0$.
Then
\[
\abs{\beta_\fq} \alpha_\fp + \alpha_\fq \beta_\fp \ge 0
\]
for all $\fp \in S$, and equality holds for $\fp=\fq$.
Since
\[
\abs{\beta_\fq} \sum_{\fp \in S} \alpha_\fp [\fp] +
 \alpha_\fq \sum_{\fp \in S'} \beta_\fp [\fp] = 0,
\]
this contradicts the $\Z_{\ge 0}$-linear independence of 
$([\fp])_{\fp \in S \setminus\{\fq\}}$.

 \ref{krull-ai:indep}$\,\Rightarrow\,$\ref{krull-ai:ai}
Consider the group homomorphism $\sigma\colon \Z^S \to G$ given by 
$\sigma( (\alpha_{\fp})_{\fp \in S}) = \sum_{\fp \in S} \alpha_\fp [\fp]$.
Our assumptions ensure that there exists some 
$(\alpha_{\fp})_{\fp \in S} \in \Z_{>0}^S \cap \ker(\sigma)$.
Moreover, the image of $\sigma$ contains a torsion-free subgroup of 
rank $\card{S} - 1$.
Thus $\ker(\sigma)$ is free of rank one.
Since we know that $\ker(\sigma)$ contains a vector with all 
positive coordinates, we can also choose a generator (as an abelian 
group) $(\beta_{\fp})_{\fp \in S}$ with all positive coordinates.
The zero-sum sequences with support $S$ correspond to elements of 
$\ker(\sigma)$ with positive entries, and they are all positive 
multiples of $(\beta_{\fp})_{\fp \in S}$.

Hence there is, up to associativity, a unique irreducible with support 
contained in $S$.
This irreducible is then necessarily absolutely irreducible.
\end{proof}

While the absolute irreducibility of elements does not lift along 
the transfer homomorphism $H \to \cB(G_0)$ in general (Remark \ref{l:th-absirred}), additional 
knowledge of the set $G_1$ nevertheless allows us to characterize 
when \emph{every} irreducible is absolutely irreducible using $\cB(G_0)$.

\begin{theorem} \label{t:krull-allai}
 Let $H$ be a Krull monoid with class group $G$, let $G_0$ be the set of classes containing
 prime divisors, and let $G_1$ be the set of classes containing precisely one prime divisor.
 The following are equivalent.
\begin{enumerate}
\item\label{krull-allai:allai}
Every irreducible of $H$ is absolutely irreducible.
\item\label{krull-allai:zss}
Every irreducible of $\cB(G_0)$ is absolutely irreducible, and 
for every irreducible $U \in \cB(G_0)$ and every 
$g \in G_0 \setminus G_1$ it holds that $\val_g(U) \le 1$.
\end{enumerate}
\end{theorem}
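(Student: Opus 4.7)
The plan is to analyse each direction via the block homomorphism $\theta\colon H \to \cB(G_0)$, combining Lemma~\ref{th-absirred:forward} with the support-characterisation of absolutely irreducible elements from Proposition~\ref{p:krull-ai}.

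For \ref{krull-allai:allai}$\,\Rightarrow\,$\ref{krull-allai:zss}, every irreducible of $\cB(G_0)$ arises as $\theta(u)$ for some irreducible $u \in H$ (Fact~\ref{fact:TH}), so Lemma~\ref{th-absirred:forward} yields the first clause of~\ref{krull-allai:zss} at once. For the multiplicity bound, suppose toward contradiction that some irreducible $U \in \cB(G_0)$ has $\val_g(U) \geq 2$ for a class $g \in G_0 \setminus G_1$. Lift $U$ to an absolutely irreducible $u \in H$ and write $uH = \fq_1^{c_1} \cdots \fq_k^{c_k}\,\mathfrak a$, where $\fq_1, \dots, \fq_k$ are the primes of class $g$ occurring in $uH$ and $\mathfrak a$ collects the other prime contributions, so $c_1 + \dots + c_k = \val_g(U) \geq 2$. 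The idea is to redistribute multiplicities among primes of class $g$ (possibly invoking an extra prime $\fq'$ of class $g$, which exists as $g \notin G_1$) to produce a second irreducible $v \in H$ with $\theta(v) = U$ but $vH \neq uH$; principality of the redistributed ideal is automatic because $\sigma(U) = 0$ is preserved, and irreducibility of $v$ follows from Fact~\ref{fact:TH}. Three sub-cases then each contradict Proposition~\ref{p:krull-ai}: if $k = 1$, replace one copy of $\fq_1$ by $\fq'$, making $\supp(vH) \supsetneq \supp(uH)$, so $\supp(vH)$ is not minimal and $v$ is not absolutely irreducible; if $k \geq 2$ with some $c_i = 1$, drop $\fq_i$ and transfer its unit multiplicity to another $\fq_j$, making $\supp(vH) \subsetneq \supp(uH)$, which directly violates the minimality of $\supp(uH)$; if $k \geq 2$ with all $c_i \geq 2$, shift one unit of multiplicity from $\fq_1$ to $\fq_2$, preserving the support but altering the ideal, so that the uniqueness clause of Proposition~\ref{p:krull-ai} would force $v \sim u$, a contradiction. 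In each case, the conclusion contradicts~\ref{krull-allai:allai}.

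For \ref{krull-allai:zss}$\,\Rightarrow\,$\ref{krull-allai:allai}, let $u \in H$ be irreducible and $u^n = w_1 \cdots w_m$ any factorisation into irreducibles. Applying $\theta$, absolute irreducibility of $U = \theta(u)$ together with the triviality of units in $\cB(G_0)$ forces $m = n$ and $\theta(w_i) = U$ for all $i$. One then compares divisorial factorisations of $uH$ and $w_iH$ class by class. For $g \in G_1 \cap \supp(U)$ the unique prime of class $g$ fixes the multiplicity in each $w_iH$. For $g \in (G_0 \setminus G_1) \cap \supp(U)$, the hypothesis $\val_g(U) \leq 1$ means $uH$ contains a single prime $\fq$ of class $g$ with multiplicity one; then $\sum_{i=1}^n \val_\fq(w_iH) = n$ with each summand bounded by $\val_g(\theta(w_i)) = 1$ forces every $w_iH$ to use $\fq$ with multiplicity one and no other prime of class $g$. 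Hence $w_iH = uH$ for every $i$, so $w_i \sim u$, and $u$ is absolutely irreducible.

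The main obstacle is the three-way case analysis in the direction \ref{krull-allai:allai}$\,\Rightarrow\,$\ref{krull-allai:zss}: one must check that the configurations of $(c_1, \dots, c_k)$ are exhaustive and that each constructed $v$ genuinely violates a distinct clause of Proposition~\ref{p:krull-ai}, while the converse reduces to a multiplicity-counting argument once absolute irreducibility is transported along $\theta$.
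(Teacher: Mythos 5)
Your proof is correct. The direction \ref{krull-allai:zss}$\,\Rightarrow\,$\ref{krull-allai:allai} and the first clause of \ref{krull-allai:allai}$\,\Rightarrow\,$\ref{krull-allai:zss} match the paper's argument essentially step for step. Where you genuinely diverge is in establishing the multiplicity bound: you redistribute the class-$g$ multiplicity of $uH$ among primes of class $g$ and run a three-way case analysis ($k=1$; $k\ge 2$ with some $c_i=1$; $k\ge 2$ with all $c_i\ge 2$), deriving a contradiction in each case from a different clause of Proposition~\ref{p:krull-ai} (non-minimality of $\supp(vH)$, non-minimality of $\supp(uH)$, and the uniqueness up to associativity of the absolutely irreducible element with a given support, respectively). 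Each case is sound: the redistributed ideal is principal because you only permute primes within a single class, so the ideal class is unchanged, and $\theta(v)=U$ keeps $v$ irreducible. The paper avoids Proposition~\ref{p:krull-ai} here entirely: writing $U=g^2T$ with $T=[\fr_1]\cdots[\fr_k]$ and choosing distinct primes $\fp\ne\fq$ of class $g$, it sets $\mathfrak a=\fr_1\cdots_v\fr_k$, $aH=\fp^2\cdot_v\mathfrak a$ and $bH=\fp\cdot_v\fq\cdot_v\mathfrak a$; both $a$ and $b$ are irreducible since they map to $U$, and $a\mid b^2$ with $a\not\sim b$ shows at once that $b$ is not absolutely irreducible. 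That single construction subsumes all three of your cases, so the paper's version is shorter and needs less machinery, while yours has the mild advantage of making explicit exactly which part of the support-minimality characterization fails in each configuration.
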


\begin{proof}
Let $\theta \colon H \to \cB(G_0)$ denote the block homomorphism.

\ref{krull-allai:allai}$\,\Rightarrow\,$\ref{krull-allai:zss}
Since $\theta$ is a transfer homomorphism, the monoid $\cB(G_0)$ 
also has the property that every irreducible is absolutely irreducible 
(by Lemma~\ref{th-absirred:forward}).

For the second property, suppose that there exists an irreducible 
$U \in \cB(G_0)$ that is of the form $U =g^2 T$ with $T \in \cF(G_0)$, 
and that there exist $\fp \ne \fq \in \mathfrak X(H)$ such that 
$[\fp]=[\fq]=g$.
We may assume $T=[\fr_1]\cdots [\fr_k]$ with $\fr_i \in \mathfrak X(H)$.
Let $\mathfrak a = \fr_1 \cdots_v  \fr_k$.
Then $\fp^2 \cdot_v \mathfrak a$ and $\fp \cdot_v \fq \cdot_v \mathfrak a$
are principal ideals, say $aH = \fp^2 \cdot_v \mathfrak a$ and 
$bH = \fp \cdot_v \fq \cdot_v \mathfrak a$ with $a$,~$b \in H$.
Since $\theta$ is a transfer homomorphism, the irreducibility of $U$ 
implies that of $a$ and $b$.
However $a \mid b^2$ and so $b$ is not absolutely irreducible, 
contradicting our assumption.

\ref{krull-allai:zss}$\,\Rightarrow\,$\ref{krull-allai:allai}
Let $a \in H$ be irreducible.
Then $aH$ has a unique factorization
  \[
    aH = \fp_1^{e_1}  \cdots_v \fp_k^{e_k} \cdot_v \fq_1 \cdots_v \fq_l,
  \]
with pairwise distinct prime ideals $\fp_1$, $\ldots\,$,~$\fp_k$ with $[\fp_i]\in G_1$ and exponents $e_i \ge 1$, and prime ideals $\fq_1$, $\ldots\,$,~$\fq_l$ with $[\fq_i] \in G_0 \setminus G_1$.
(Assumption \ref{krull-allai:zss} applied to $\theta(a)$ implies $[\fq_i] \ne [\fq_j]$ for $i \ne j$.)

Suppose that $b \in H$ is an irreducible with $b \mid a^n$ for some $n \ge 1$.
  Then $\theta(b) \mid \theta(a)^n$.
  Since $\theta(a)$ is absolutely irreducible, we get $\theta(a) = \theta(b)$.
  Since $[\fp_i] \in G_1$ for all $1 \le i \le k$, the image $\theta(b)$ 
fully determines the multiplicity of each $\fp_i$ in $bH$.
  Now
  \[
 bH = \fp_1^{e_1}  \cdots_v \fp_k^{e_k} \cdot_v \fq_{i_1} \cdots_v \fq_{i_m},
  \]
 for some $1 \le i_1 \le \cdots \le i_m \le l$.
 Since $[\fq_{i_r}] \in G_0 \setminus G_1$ for $1 \le r \le m$, assumption \ref{krull-allai:zss} applied to $\theta(b)$ implies $[\fq_{i_r}] \ne [\fq_{i_s}]$ for $r \ne s$.
 Thus $1 \le i_1 < \cdots < i_m \le l$. 
 Now $|\theta(a)|=|\theta(b)|$ shows $\{i_1,\ldots,i_m\}=\{1,\ldots,l\}$, and so $aH=bH$.
\end{proof}

\subsection{Consequences for Krull monoids} \label{subsec:krull-corollaries}

As stated at the beginning of the section, we now note some easy consequences of Theorem~\ref{t:krull-allai} that connect to the existing literature and are of interest in their own right.
Aside from the fact that Corollary~\ref{c:krull-monoid-all-absirred} can be used to obtain (ii) of Fact~\ref{f:algint}, these results are however not used in this paper.

Given Theorem~\ref{t:krull-allai}, and the fact that we are often interested 
in cases when $G_0=G$ and $G_1=\emptyset$, it is useful to determine 
when every irreducible of $\cB(G)$ is absolutely irreducible.
The equivalence of the first two statements of the following corollary 
is well-known.

\begin{corollary} \label{c:bg-all-absirred}
  Let $G$ be an abelian group.
Then the following are equivalent for the monoid of zero-sum sequences
$\mathcal B(G)$.
  \begin{enumerate}
  \item \label{baa:group} $\card{G} \le 2$.
  \item \label{baa:factorial} $\mathcal B(G)$ is factorial.
  \item \label{baa:absirred} Every irreducible of $\mathcal B(G)$
 is absolutely irreducible.
  \end{enumerate}
\end{corollary}

\begin{proof}
 \ref{baa:group}$\,\Rightarrow\,$\ref{baa:factorial}:
We recall the (well-known) argument.
If $G$ is trivial, then the sequence $0$ (that is, the sequence of 
length $1$ consisting of the single element $0 \in G$) is the only 
irreducible of $\cB(G)$, and $\cB(G) \cong \N_0$ is factorial.
If $G=\{0,g\} \cong \Z/2\Z$, then $0$ and $g^2$ are the only 
irreducibles of $\cB(G)$, and they are both prime, so that
$\cB(G) \cong \N_0^2$.

\ref{baa:factorial}$\,\Rightarrow\,$\ref{baa:absirred}: Trivial.

\ref{baa:absirred}$\,\Rightarrow\,$\ref{baa:group}: 
We prove the contrapositive.
  Assume $\card{G} \ge 3$.
  Then one of the following three cases must occur.
\begin{itemize}
 \item
\emph{$G$ contains an element $g$ of finite order $n \ge 3$.}
Consider the irreducibles $S=g^n$, $S'=(-g)^n$ and $T=g(-g) \in \cB(G)$.
Then $S S' = T^n$, and so $T$ is not absolutely irreducible (note $g \ne -g$).
\item 
\emph{$G$ contains two independent elements $g$, $h$ both of order $2$.}
Consider the irreducibles $S=g^2$, $S'=h^2$, $S''=(g+h)^2$, and 
$T=gh(g+h) \in \cB(G)$.  Then $SS'S'' = T^2$.

\item 
\emph{$G$ contains an element $g$ of infinite order.}
Consider $S=(3g)(-g)^3$, $S'=(3g)^2(-2g)^3$ and $T=(-g)(-2g)(3g) \in \cB(G)$.
 Then $S$, $S'$, and $T$ are irreducible and $SS'=T^3$. \qedhere
 \end{itemize}
\end{proof}

The statements of the previous lemma are 
further equivalent to $\cB(G)$ being half-factorial \cite[Theorem 3.4.11.5]{GeHa2006:NonUniq}.

The following (straightforwardly) generalizes the theorem of Chapman
 and Krause \cite[Corollary 3.2]{ChKr2012:Atomic-decay}, who proved 
the statement for $H=\mathcal \cO_K^\bullet$ with $\mathcal \cO_K$ 
a ring of algebraic integers in a number field.

\begin{corollary} \label{c:krull-monoid-all-absirred}
Let $H$ be a Krull monoid such that every class in the class group $G$ 
contains a prime divisor \textup(that is, $G=G_0$\textup).
Then every irreducible element of $H$ is absolutely irreducible if and 
only if $H$ is factorial.
\end{corollary}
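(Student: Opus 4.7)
Plan. The ``if'' direction is immediate: in any atomic cancellative commutative monoid, a prime element is absolutely irreducible (if a prime $p$ divides $q^n$ for an irreducible $q$, then $p \mid q$, whence $p \sim q$), and in a factorial monoid every irreducible is prime.

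For the ``only if'' direction, I plan to combine Theorem~\ref{t:krull-allai} with Lemma~\ref{l:bg-all-absirred}. Under the hypothesis, the first clause of Theorem~\ref{t:krull-allai} yields that every irreducible of $\cB(G_0) = \cB(G)$ (using $G = G_0$) is absolutely irreducible, so Lemma~\ref{l:bg-all-absirred} forces $\card{G} \le 2$. In the case $\card{G} = 1$ the class group is trivial, every divisorial prime ideal is principal, and $H$ is factorial, so we are done.

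The substantive step is to exclude $\card{G} = 2$. Writing $G = \{0,g\}$ with $g$ of order two, the irreducible $U = g^2 \in \cB(G)$ satisfies $\val_g(U) = 2$; the second clause of Theorem~\ref{t:krull-allai} therefore forces $g \in G_1$, so there is a \emph{unique} prime divisor $\fp$ of $H$ with $[\fp]=g$. I would then invoke the defining property of a Krull monoid that each essential valuation $\val_\fp\colon H \to \Z_{\ge 0}$ is surjective. For any $x \in H$, the principality of $xH$ gives $\sum_{\fq \in \mathfrak X(H)} \val_\fq(x)[\fq] = 0$ in $G$, and since every $\fq \ne \fp$ lies in class $0$, this reduces to $\val_\fp(x)\cdot g = 0$; hence $\val_\fp(H) \subseteq 2\Z_{\ge 0}$, contradicting surjectivity. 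The main obstacle is precisely this last point: the combinatorial condition $g \in G_1$ returned by Theorem~\ref{t:krull-allai} is incompatible with $G = G_0$ in any genuine Krull monoid of class-group order two, so the case $\card{G} = 2$ is vacuous and only $\card{G} = 1$ survives.
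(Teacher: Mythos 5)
Your proof is correct, and while it follows the paper's skeleton for the first part (easy direction; then Theorem~\ref{t:krull-allai} plus Lemma~\ref{l:bg-all-absirred} to force $\card{G}\le 2$), it rules out the case $\card{G}=2$ by a genuinely different argument. The paper cites \cite[Theorem 2.5.4.1(b)]{GeHa2006:NonUniq} to obtain two distinct prime divisors $\fp\ne\fq$ in the nontrivial class $g$, and then exhibits an explicit non-absolutely-irreducible element: with $\fp^2=uH$, $\fq^2=vH$, $\fp\cdot_v\fq=wH$ one has $w^2\sim uv$. You instead exploit the \emph{second} clause of Theorem~\ref{t:krull-allai}: since $U=g^2$ is irreducible in $\cB(G)$ with $\val_g(U)=2$, that clause forces $g\in G_1$, and you then derive a contradiction from the fact that the essential valuation $\val_\fp$ at the unique prime $\fp$ over $g$ would only take even values on $H$. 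Both routes ultimately rest on the same underlying fact --- that in a Krull monoid with $G=G_0\cong\Z/2\Z$ the nontrivial class must contain at least two prime divisors --- but you prove it rather than cite it, which makes your version more self-contained given the machinery already developed in the paper, whereas the paper's version is more concrete in that it displays the offending element. One small caveat: the surjectivity of $\val_\fp\colon H\to\Z_{\ge 0}$ is not the \emph{definition} of a Krull monoid as stated in the paper ($v$-noetherian and completely integrally closed); it is a standard consequence of the divisor-theory characterization (each $\fp\in\mathfrak X(H)$ is a greatest common divisor of finitely many principal divisorial ideals, so $\val_\fp$ attains the value $1$), and you should cite or state that equivalence explicitly, e.g.\ \cite[Theorem 2.4.8]{GeHa2006:NonUniq}.
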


\begin{proof}
If $H$ is factorial, then every irreducible is prime and hence 
absolutely irreducible.

For the converse, suppose that every irreducible of $H$ is 
absolutely irreducible. By Theorem~\ref{t:krull-allai}, every 
irreducible of $\cB(G)$ must be absolutely irreducible.
Thus $\card{G} \le 2$ by Corollary~\ref{c:bg-all-absirred}.
To show that $G$ is trivial it therefore suffices to show 
$G \not \cong \Z/2\Z$.

Suppose that $G =\{0,g\} \cong \Z/2\Z$.
By (b) of \cite[Theorem 2.5.4.1]{GeHa2006:NonUniq} there must 
exist at least two distinct non-zero divisorial prime ideals 
$\mathfrak p$ and $\mathfrak q$ of non-trivial class 
$[\mathfrak p]=[\mathfrak q]=g$.
Therefore $\mathfrak p^2$, $\mathfrak q^2$ and $\mathfrak {p \cdot_v q}$ 
are all principal and generated by irreducibles $u$, $v$, $w$, say, 
$\mathfrak p^2 = uH$, $\mathfrak q^2=vH$ and $\mathfrak {p \cdot_v q} = wH$.
  Now $w^2 \sim uv$ shows that $w$ is not absolutely irreducible.
\end{proof}

We will see below, in Proposition~\ref{prop:exm-allai-but-not-ufd}, 
that some assumption on $G_0$ is necessary for the previous proposition.
We also recover a theorem of Angerm\"uller; see 
\cite[Theorem 1(e)]{Angermueller2022} for a generalization to 
monadically Krull monoids.

\begin{corollary}[{\cite[Corollary 1(c)]{Angermueller2020}}] \label{c:angermueller}
A Krull monoid $H$ is factorial if and only if every absolutely 
irreducible element is prime.
\end{corollary}

\begin{proof}
 If $H$ is factorial, then every irreducible is prime.

For the converse, suppose that every absolutely irreducible element is prime.
Suppose that $H$ is not factorial.
Then there exist non-prime irreducibles.
Let $u\in H$ be a non-prime irreducible with $\supp(uH)$ minimal among 
all non-prime irreducibles.
Then $\supp(uH)$ is in fact minimal among all irreducibles: if $p \in H$ is a prime, then $pH$ is a divisorial prime ideal and hence $\supp(pH)=\{pH\} \subseteq \supp(uH)$ would imply that $pH$ appears in the factorization of $uH$ into divisorial prime ideals, meaning $p \mid u$, which is impossible since $u$ is a non-prime irreducible.
Then $u$ is absolutely irreducible by Proposition~\ref{p:krull-ai}, 
and therefore prime, a contradiction.
\end{proof}

\begin{remark}
Absolutely irreducible elements in Krull monoids have been studied in 
different settings and under different names.
For instance, in \cite{Grynkiewicz2022} they play a very central role 
in the setting of $H=\cB(G_0)$ with $G$ torsion-free, and are called 
\emph{elementary atoms}.
If $H$ is a normal affine monoid, then the absolutely irreducible elements 
correspond precisely to the extremal rays of the polyhedral convex cone, 
whereas the irreducible elements form the Hilbert basis of the monoid.
We refer to \cite{garciasanchez-krause-llena2023}, in particular to 
\S 4 and therein to Remarks 13 and 16 for a discussion of the terminologies.
\end{remark}

\section{Rings with irreducible elements that are all absolutely
irreducible, but none of them prime}
\label{section:NPA}

\begin{proposition}\label{Prop:withoutprime}\label{mpm}
There exists a Dedekind domain that is not half-factorial and such that all 
of its irreducible elements are absolutely irreducible but none of 
them prime.
\end{proposition}
\begin{proof}
Let $n \ge 2$ be an integer, let $G = \Z^n$, and let $\{e_1, e_2, \ldots, e_n \}$ 
the standard $\Z$-basis for $G$. Define $f = \sum_{i=1}^n e_i$ and set
 \[G_0 = \left\{\pm e_i,~ \pm f  \right\}.\]

Consider the monoid of zero-sum sequences $\mathcal{B}(G_0)$ over $G_0$.
The irreducible elements of $\mathcal{B}(G_0)$ are
\[\left\{e_i(-e_i),~ f (-f),~ e_1e_2
\cdots e_n(-f),~ (-e_1)(-e_2) \cdots (-e_n)f\right\}.\]
Because the supports of these sequences are pairwise incomparable, all of them are absolutely
 irreducible.

Let \[U = e_1e_2 \cdots e_n(-f) \quad\text{and}\quad V = (-e_1)(-e_2)
\cdots (-e_n)f.\] Then in $\mathcal{B}(G_0)$ we have the non-unique
factorization
\[UV = (e_1(-e_1)) \cdot (e_2(-e_2)) \cdots (e_n(-e_n)) \cdot f(-f).\]

It follows from \cite[Theorem 8]{Gilmer-Heinzer-Smith1996:dop} that 
there exists a Dedekind domain $D$ with class group $G = \Z^n$ and
$G_0$ precisely the set of classes containing prime ideals.
There exists a transfer homomorphism $\varphi \colon D\setminus\{0\} \to \mathcal{B}(G_0)$ (see Theorem~\ref{t:krull-transfer}).
In particular, the domain $D$ is not half-factorial and an element $a \in D$ 
is irreducible if and only if the corresponding zero-sum sequence $\varphi(a) \in \mathcal{B}(G_0)$ is irreducible.

Since $0 \notin G_0$, the trivial ideal class of $D$ contains no prime ideals.
Hence $D$ contains no prime element.
Finally, every irreducible element of $\mathcal B(G_0)$ is square-free,
and so Theorem~\ref{t:krull-allai} implies that every irreducible element 
of $D$ is absolutely irreducible.
\comment{%
 Finally, every irreducible element of $D$ is absolutely irreducible.
 Indeed, let $a \in D$ be irreducible. Then $aD$ factors as a product 
of non-zero prime ideals of $D$, say
 \[aD = \fp_1 \cdots \fp_m.\]
Since all the irreducible elements of $\mathcal{B}(G_0)$ are square-free, 
that is, the elements of $G_0$ do not appear with a power greater than
$1$, all the primes $\fp_j$ lie in distinct classes of $G$. For this 
reason, a non-unique factorization of $a^k$, for some positive integer
$k$, can only arise from a non-unique factorization of the element 
$[aD]^k$ in $\mathcal{B}(G_0)$. But the irreducible element 
$[aD] \in \mathcal{B}(G_0)$ is absolutely irreducible, as mentioned above.
}
\end{proof}

\section{Rings with all irreducible elements absolutely irreducible
but not all prime}\label{section:PA}

In contrast to Proposition~\ref{Prop:withoutprime}, the following
result gives an analogous example of a Dedekind domain, but this 
time it contains a prime element.

\begin{proposition} \label{prop:exm-allai-but-not-ufd}\label{mpp}
There exists a Dedekind domain $D$ that is not half-factorial,
contains a prime element, and such that all of its
irreducible elements are absolutely irreducible.
\end{proposition}
\begin{proof}
Repeat the proof of Proposition~\ref{Prop:withoutprime} with the set
\[G_0 = \left\{0, \pm e_i,~ \sum_{i = 1}^ne_i,~ \sum_{i = 1}^n-e_i \right\}.\]
Note that $0 \in G_0$ and hence there exists a non-zero principal 
prime ideal in $D$ and therefore a prime element.
\end{proof}

\textbf{Acknowledgment.} The authors wish to thank the second of two
anonymous referees for contributing an additional example of a ring
containing primes and both absolutely and non-absolutely irreducible
elements to Section~\ref{section:PANA}, namely, the one in
Example~\ref{pana}.
\bibliographystyle{plain}
\bibliography{bibliography_PAN}

\end{document}